\newtheorem{theorem}{Theorem}[section]
\newtheorem{lemma}[theorem]{Lemma}
\newtheorem{definition}[theorem]{Definition}
\newtheorem{proposition}[theorem]{Proposition}
\newtheorem{corollary}[theorem]{Corollary}
\newtheorem{remark}[theorem]{Remark}
\def\<{\langle}
\def\>{\rangle}
\def\a{\alpha}
\def\b{\beta}
\def\d{\delta}
\def\di{\diamond}
\def\e{\eta}
\def\g{\gamma}
\def\l{\lambda}
\def\o{\otimes}
\def\p{\phi}
\def\r{\rho}
\date{}
\begin{document}
\renewcommand{\baselinestretch}{1.2}
\renewcommand{\arraystretch}{1.0}
\title{\bf Poisson Hopf module Fundamental theorem for Hopf group coalgebras}
 \date{}
\author {{\bf Daowei Lu$^1$\footnote {Corresponding author:  ludaowei620@126.com}, Dingguo Wang$^2$}\\
{\small $^1$School of Mathematics and Big data, Jining University}\\
{\small Qufu, Shandong 273155, P. R. China}\\
{\small $^2$School of Mathematical Sciences, Qufu Normal University}\\
{\small Qufu, Shandong 273165, P. R. China}
}
 \maketitle
\begin{center}
\begin{minipage}{12.cm}

\noindent{\bf Abstract.} Let $H$ be a Hopf group coalgebra with a bijective antipode and $A$ an $H$-comodule Poisson algebra. In this paper, we mainly generalize the fundamental theorem of Poisson Hopf modules to the case of Hopf group coalgebras. Finally we will deduce the relative projectivity in the category of Poisson Hopf modules.
\\

\noindent{\bf Keywords:} Hopf group coalgebras; Poisson algebra; Poisson module; Poisson Hopf module.
\\

 \noindent{\bf  Mathematics Subject Classification:} 17B63, 16T05.
 \end{minipage}
 \end{center}
 \normalsize\vskip1cm

\section*{Introduction}

Poisson algebras have lately been playing an important role in algebra~\cite{BLLM,GR}, geometry~\cite{BV}, mathematical
physics\cite{Kon,CP} and other subjects\cite{Od}. The study of Poisson algebras led to other algebraic structures, such as noncommutative Poisson algebras\cite{Xu}, Jacobi algebras (also called generalized Poisson algebras)\cite{Agore}, Gerstenhaber algebras\cite{Kos}, Lie-Rinehart algebras\cite{Rin} and Novikov-Poisson algebras\cite{XXu}. 

In the theory of Hopf algebras, the fundamental theorem of Hopf modules states that any Hopf module could be decomposed through its  coinvariant, which is an important result and  plays an essential role in the theory of Hopf algebras, such as the integral theory, Nichols-Zoller theorem, Galois theory and so on~\cite{Mon}.

Doi~\cite{D} introduced the concept of relative Hopf modules, which is a generalization
of Hopf modules. Moreover, he gave the fundamental theorem of relative Hopf modules, which is a weaker version of  the fundamental theorem of Hopf modules. Explicitly let $H$ be a Hopf algebra, and $A$ a right $H$-comodule algebra. If there exists a right $H$-comodule map $\phi:H\rightarrow A$ which is also an algebra map, then for any relative right $(A,H)$-Hopf module $M$, the following isomorphism of relative right $(A,H)$-Hopf modules is obtained
$$A\o_{A^{coH}}M^{coH}\rightarrow M,\ m\o a\mapsto m\cdot a.$$

Let $H$ be a Hopf algebra and $A$ a Poisson algebra such that $A$ is an $H$-comodule Poisson algebra. In~\cite{Gu} Gu$\acute{e}$d$\acute{e}$non established the fundamental theorem of Poisson $(A,H)$-Hopf modules, which generalized the fundamental isomorphism to the case of Poisson algebra. Following the research of Gu$\acute{e}$d$\acute{e}$non, in \cite{Lu} the authors generalized the fundamental isomorphism of Poisson Hopf modules to the case of weak Hopf algebras. 

Let $G$ be a group. V.G. Turaev\cite{Tur} introduced the notion of a modular crossed $G$-category and showed that such a category gives rise to a three-dimensional homotopy quantum field theory with target space $K(G,1)$. Meanwhile  plays a key role in the construction of Hennings-type invariants of flat group-bundles over complements of link in the 3-sphere\cite{V2}.  Examples of $G$-categories
can be constructed from the so-called Hopf $G$-coalgebras also intoduced in\cite{Tur}.  A. Virelizier\cite{V1} studied algebraic properties of Hopf $G$-coalgebras and given the fundamental theorem for Hopf $G$-comodules. Later Li and Chen generalized this classic result to the case of relative Hopf $G$-comodules\cite{Li}.

Motivated by these results, in this paper, we mainly generalize the fundamental theorem of Poisson Hopf modules to the case of Hopf group coalgebras. The paper is organized as follows. In section 1, we will recall basic results on Hopf group coalgebra and Poisson algebras. Let $H$ be a Hopf group coalgebra  and $A$ a family of Poisson algebras. In section 2, , we will firstly introduce the notion of Poisson $(A, H)$-Hopf modules and give the first main result:
\begin{theorem}
Let $H$ be a Hopf $G$-algebra and $A$  an $H$-comodule Poisson algebra. Suppose that there is an $H$-colinear map $\p:H\rightarrow A^A$ such that for all $\a\in G$, $\p_\a(1_{H_\a})=1_{A_\a}$.

(1) If $H$ is commutative or  $\phi$ is an algebra map, then every Poisson $(A, H)$-Hopf module which is injective as a Lie $A$-module is an injective $(\underline{A}, H)$-comodule.

(2) If $H$ is commutative and $\phi$ is an algebra map, then every Poisson $(A, H)$-Hopf module which is injective as a Poisson $A$-module is an injective Poisson $(A, H)$-Hopf module.
\end{theorem}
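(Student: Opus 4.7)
The plan is to reduce the injectivity statements to the fundamental isomorphism theorem for Poisson $(A,H)$-Hopf modules (the central result being generalized in the paper), combined with an averaging operator built out of $\p$ and the antipode. The underlying philosophy is classical (Doi--Gu\'ed\'enon): an $H$-colinear map $\p:H\ra A^A$ with $\p_\a(1_{H_\a})=1_{A_\a}$ supplies a ``total integral,'' which allows one to transport injectivity from the module category over $A$ (or $A^{coH}$) to the relevant Hopf-module category.

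First, for each $\a\in G$ I would introduce an averaging map
$$E_\a(m)=m_{(0),\a}\c \p_\a\bigl(S_\a(m_{(1),\a})\bigr),$$
where $m\mapsto m_{(0),\a}\o m_{(1),\a}$ denotes the $\a$-component of the $H$-coaction on $M$ and $S_\a$ is the antipode in degree $\a$. The first job is to check: (i) the image of $E_\a$ lands in the coinvariants; (ii) under the hypotheses of part~(1) (i.e.\ $H$ commutative \emph{or} $\p$ an algebra map), $E=\{E_\a\}_{\a\in G}$ is a morphism of Lie $A$-modules; (iii) under the stronger hypotheses of part~(2), $E$ is in addition a morphism of Poisson $A$-modules. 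Steps (ii) and (iii) are precisely where each hypothesis enters: one of them is needed to commute $\p_\a$ past the bracket terms arising from the Lie (resp.\ Poisson) compatibility and from the $G$-indexed coproduct $\D_{\a,\b}$.

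Next, to establish injectivity in part~(1), I would take an $(\ul{A},H)$-comodule monomorphism $\iota:N\hookrightarrow N'$ together with an $(\ul{A},H)$-comodule morphism $f:N\ra M$, and extend $f$ to $N'$. By the fundamental isomorphism $M\cong A\o_{A^{coH}}M^{coH}$, it suffices to produce a Lie $A^{coH}$-module extension $(N')^{coH}\ra M^{coH}$ of $E_M\circ f|_{N^{coH}}$, and then to apply $A\o_{A^{coH}}-$. The injectivity of $M$ as a Lie $A$-module together with the splitting supplied by $E$ transfers to injectivity of $M^{coH}$ as a Lie $A^{coH}$-module, yielding the required extension. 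Part~(2) is structurally identical with ``Lie'' replaced by ``Poisson'' throughout, the stronger hypothesis ensuring that the averaging and the fundamental isomorphism preserve the full Poisson structure at every step.

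The main obstacle I expect is the verification at step~(ii)/(iii): one must check by hand that $E$ respects the Lie (and then the Poisson) bracket, carrying along the Sweedler-type bookkeeping of the family of coactions, the interaction of the antipodes $S_\a$ with $\p_\a$, and the commutation of $\p$ with the bracket. This is exactly the point at which the two alternative hypotheses of part~(1) (and their conjunction in part~(2)) are forced upon us. Once this computation is in hand, the remaining passage between injectivity at the $A$-module (resp.\ $A^{coH}$-module) level and at the $(A,H)$-Hopf-module level is a formal consequence of the fundamental isomorphism and the naturality of $E$.
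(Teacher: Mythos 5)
Your proposal diverges from the paper's argument in a way that introduces a genuine gap. You propose to reduce injectivity to the fundamental isomorphism $M\cong A\o_{A^{coH}}M^{coH}$, but that isomorphism is not available under the hypotheses of this theorem: in the paper's development it requires $\phi$ to be an $H$-colinear \emph{algebra} map into $A^A$ \emph{and} the $\di'$-actions of $A_e$ on $M^{coH}$ and $A^{coH}$ to be trivial. Neither assumption appears here, and part (1) explicitly covers the case where $\phi$ is \emph{not} an algebra map (only $H$ commutative). So the central reduction of your plan is circular with respect to hypotheses you do not have. There is a second problem even where the isomorphism holds: you claim that injectivity of $M^{coH}$ over $A^{coH}$ transfers back to injectivity of $M$ ``by applying $A\o_{A^{coH}}-$.'' Induction is a \emph{left} adjoint; it preserves projectives, not injectives, so this passage is not the formal consequence you assert. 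Your averaging map $E_\a(m)=m_{(0),\a}\cdot\p_\a(S_\a(m_{(1),\a}))$ is essentially the coinvariant projection $p^M_\a$ that the paper only needs later, in the fundamental theorem of Section 3 --- it is not the right tool for this statement.

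The argument that works (and is the one the paper gives) goes in the opposite direction: do not pass to coinvariants at all. First show, via the Hom--tensor isomorphism $_{\underline{A}}\mathrm{Hom}^H(M,N\o H)\cong{_{\underline{A}}\mathrm{Hom}}(M,N)$ of Lemma \ref{Lem:2g}, that $M\o H$ is an injective $(\underline{A},H)$-comodule (resp.\ injective Poisson $(A,H)$-Hopf module) whenever $M$ is injective as a Lie (resp.\ Poisson) $A$-module. Then use the total integral $\p$ to build a retraction $\l:M\o H\ra M$ of the coaction $\r^M$, namely $\l_\a((m_\mu\o h_\nu)_{\mu\nu=\a})=\p_\a(h_\a S^{-1}_\a(m_{e(1,\a^{-1})}))\cdot m_{e(0,\a)}$, and verify that $\l$ is $H$-colinear and Lie $A$-linear (this is where ``$H$ commutative or $\p$ an algebra map'' enters, since one must cancel $\p_\a(a_{(2,\a)}h_\a S^{-1}_\a(a_{(1,\a^{-1})}m_{e(1,\a^{-1})}))$ against $\p_\a(h_\a S^{-1}_\a(m_{e(1,\a^{-1})}))$, using also that $\p$ lands in the Poisson center so the bracket term vanishes). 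Then $M$ is a direct summand of the injective object $M\o H$ inside $_{\underline{A}}\mathcal{M}^H$ (resp.\ $_{\mathcal{P}A}\mathcal{M}^H$), hence injective. Your instinct that the alternative hypotheses are needed precisely to commute $\p$ past the bracket and coproduct bookkeeping is correct, but that verification must be performed on the retraction of the coaction, not on a coinvariants projection feeding into the fundamental isomorphism.
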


In section 3, we will establish the fundamental isomorphism of Poisson $(A,H)$-Hopf modules. That is,
\begin{theorem}
 Let $M$ be a Poisson $(A, H)$-Hopf module, and $\phi:H\rightarrow A^A$ a right $H$-colinear algebra map. Suppose $M^{coH}$ and $A^{coH}$ are trivial Lie $A_e$-modules under $\di'$. Then the morphism $\Phi$ given in Lemma \ref{lem:2k} is an isomorphism of Poisson $(A, H)$-Hopf modules.
\end{theorem}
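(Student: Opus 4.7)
\medskip

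\noindent\textbf{Proof proposal.}
The plan is to construct an explicit two-sided inverse $\Psi:M\to A\o_{A^{coH}}M^{coH}$ of $\Phi$. The natural candidate is built from a projection $\pi:M\to M^{coH}$ defined, for $m$ with $\a$-th coaction $\rho_\a(m)=m_{(0,\a)}\o m_{(1,\a)}\in M\o H_\a$, by
\[
\pi(m)\;=\;m_{(0,\a^{-1})}\cdot\phi_{\a^{-1}}\!\bigl(S_\a(m_{(1,\a)})\bigr).
\]
A direct computation using the Hopf $G$-coalgebra antipode axiom together with the $H$-colinearity of $\phi$ should show $\pi(m)\in M^{coH}$. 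The candidate inverse is then
\[
\Psi(m)\;=\;\phi_\a(m_{(1,\a)})\,\o_{A^{coH}}\,\pi(m_{(0,\a)}),
\]
with the indices summed appropriately over $G$.

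I would then verify in turn that $\Psi$ is well-defined over $A^{coH}$ and that $\Phi\circ\Psi=\mathrm{id}_M$ and $\Psi\circ\Phi=\mathrm{id}$. The first identity follows from the antipode relation $m_H\cdot \phi(S(m_H')) = \varepsilon(m_H)1$ (in the appropriate $G$-graded form) combined with $\phi$ being an algebra map; the second collapses via the counit axiom once one restricts to the $e$-th component where the coaction on $A^{coH}\o_{A^{coH}} M^{coH}$ is trivial. Because $\Phi$ is just left multiplication, its $A$-linearity and $H$-colinearity are immediate from the $(A,H)$-Hopf module axioms, so the content of being a morphism of Poisson $(A,H)$-Hopf modules lies in checking compatibility with the Lie bracket and the Poisson action $\diamond'$.

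This is where the coinvariance hypotheses enter. Expanding $\{a\cdot n,\,a'\cdot n'\}$ via Leibniz produces four terms involving $\{a,a'\}$, $\{a,n'\}$, $\{n,a'\}$ and $\{n,n'\}$. Triviality of $M^{coH}$ and $A^{coH}$ as Lie $A_e$-modules under $\diamond'$ forces the purely coinvariant brackets $\{n,n'\}$ to vanish and the mixed actions $\{a,n'\}$, $\{n,a'\}$ to factor through the $A$-action, exactly matching the Poisson structure that $A\o_{A^{coH}}M^{coH}$ inherits by $A$-linear extension. Combined with the Poisson Hopf module axioms, this promotes $\Phi$ from a bijective $A$-linear $H$-colinear map to the desired isomorphism in the category of Poisson $(A,H)$-Hopf modules, paralleling Gu\'ed\'enon~\cite{Gu} in the $G$-graded setting of~\cite{V1,Li}.

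The main obstacle I anticipate is bookkeeping rather than any conceptual point. The Hopf $G$-coalgebra framework tags every object by an element of $G$, the antipode $S_\a:H_\a\to H_{\a^{-1}}$ shifts indices at each step, and the summation conventions defining $\pi$ and $\Psi$ must be kept compatible across all components simultaneously; a careless antipode application misaligns the grading and destroys the cancellations above. Combined with the need to preserve Poisson compatibilities under the coinvariance triviality hypothesis, this index discipline is where I expect to spend the bulk of the effort; once set up consistently, the argument follows the classical Doi--Gu\'ed\'enon template adapted componentwise to the $G$-graded setting.
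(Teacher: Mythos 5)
Your proposal follows essentially the same route as the paper: the inverse is $\Psi_\a(m)=\phi_\a(m_{(1,\a)})\otimes_{B_\a} p^M_\a(m_{(0,e)})$ with $p^M_\a(m)=\phi_\a(S^{-1}_\a(m_{(1,\a^{-1})}))\cdot m_{(0,\a)}$ the $\phi$-twisted coinvariant projection, the two composites collapse via the antipode and counit axioms together with $p^M_\a(a\cdot m)=p^A_\a(a)\cdot p^M_\a(m)$, and the triviality hypotheses serve precisely to identify $M^{coH}$ with $M^{AcoH}$ (and $A^{coH}$ with $A^{AcoH}=B$), which is what kills the extra Leibniz term $a\cdot(a'\diamond n)$ and makes $\Psi$ land in the correct target. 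The one correction: since the paper uses a \emph{left} $A$-action with a right $H$-coaction, the projection must be written with $S^{-1}$ acting on the left as above, not $m_{(0,\a^{-1})}\cdot\phi_{\a^{-1}}(S_\a(m_{(1,\a)}))$ as in your formula for $\pi$, which is the right-action convention and would misalign the antipode cancellation.
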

In the end we will deduce relative projectivity in the category of Poisson $(A, H)$-Hopf modules.

\section{Preliminaries}
\def\theequation{1.\arabic{equation}}
\setcounter{equation} {0}

Throughout this article,  let $k$ be a fixed field, and all the vector spaces, tensor product and homomorphisms are over $k$. 

Let $G$ be an abelian group with the unit element $e$. Recall from \cite{Tur} that a $G$-coalgebra is a family $H=\{H_\a\}_{\a\in G}$ of $k$-spaces together with a family of $k$-linear maps $\Delta=\{\Delta_{\a,\b}:H_{\a\b}\rightarrow H_\a\o H_\b\}_{\a,\b\in G}$  (called a comultiplication) and a $k$-linear map $\varepsilon:H_e\rightarrow k$ (called a counit), such that $\Delta$ is coassociative and 
$$(id_\a\o\varepsilon)\Delta_{\a,e}=id_\a=(\varepsilon\o id_\a)\Delta_{e,\a}.$$

A Hopf $G$-coalgebra is a $G$-coalgebra $H=\{H_\a\}_{\a\in G}$ such that for all $\a,\b\in G$,
\begin{itemize}
  \item [(1)] each $H_\a$ is an algebra;
  \item [(2)] $\varepsilon$ and $\Delta_{\a,\b}$ are algebras maps;
  \item [(3)] there exists a family of $k$-linear maps $S=\{S_\a:H_\a\rightarrow H_{\a^{-1}}\}_{\a\in G}$ satisfying 
  $$m_{\a}(S_{\a^{-1}}\o id_{H_{\a}})\Delta_{\a^{-1},\a}=\varepsilon1_e=m_{\a^{-1}}(id_{H_{\a^{-1}}}\o S_{\a})\Delta_{\a^{-1},\a}.$$
\end{itemize}

Let $H$ be a Hopf $G$-coalgebra, then we have the following identities:
\begin{align*}
&S_\a(b)S_\a(a)=S_{\a}(ab),\ S_e(1_e)=1_e,\\
&\Delta_{\b^{-1},\a^{-1}}S_{\a\b}=\sigma_{H_{\a^{-1}},H_{\b^{-1}}}(S_\a\o S_\b)\Delta_{\a,\b},\ \varepsilon S_e=\varepsilon,
\end{align*}
for all $a\in H_\a,b\in H_\b$, where  $\sigma$ denotes the flip map.

Throughout this paper, we will use the Sweedler’s notation: for all $\a,\b\in G$ and $h\in H_{\a\b}$,
$$\Delta_{\a,\b}(h)=h_{(1,\a)}\o h_{(2,\b)}.$$

Let $H=\{H_\a\}_{\a\in G}$ be a Hopf $G$-coalgebra. An right $H$-comodule is a family of vector spaces $M=\{M_\a\}_{\a\in G}$ endowed with a family of $k$-linear maps $\r=\{\r_{\a,\b}:M_{\a\b}\rightarrow M_\a\o H_\b\}$ satisfying
\begin{align*}
&(id_\a\o\r_{\b\g})\circ\r_{\a,\b\g}=(\r_{\a,\b}\o id_\g)\circ \r_{\a\b,\g},\\
&(\varepsilon\o id_\a)\circ\r_{e,\a}=id_\a=( id_\a\o\varepsilon)\circ\r_{\a, e}.
\end{align*}
Similarly, we use the Sweedler’s notation for coactions
$$\r_{\a,\b}(m)=m_{(0,\a)}\o m_{(1,\b)},$$
for all $m\in M_{\a\b}$.

An $H$-comodule map between two right $H$-comodules $M$ and $N$ is a family of $k$-linear maps $f=\{f_\a:M_\a\rightarrow N_\a\}_{\a\in G}$ such that for all $m\in M_{\a\b}$,
$$
f_{\a\b}(m)_{(0,\a)}\o f_{\a\b}(m)_{(1,\b)}=f_\a(m_{(0,\a)})\o m_{(1,\b)}.
$$
Let $M$ be a right $H$-comodule. The coinvariants of $H$ on $M$ are the elements of the space
$$\left\{m=(m_\a)\in \prod\limits_{\a\in G}M_\a\Big{|}\r_{\a\b}(m_{\a\b})=m_\a\o 1_\b \hbox{\ for all }\a,\b\in G\right\}.$$
Let $M^{coH}_\a$ be the image of the (canonical) projection of this set onto $M_\a$. It is easy to verify that $M^{coH}=\{M^{coH}_\a\}_{\a\in G}$ is a right subcomodule of $M$, called the subcomodule of coinvariants.

Let $A$ be a family of Poisson algebras $A=\{A_\a\}_{\a\in G}$ (called group Poisson algebra), and denote the Poisson center of $A_\a$ by $A^{A_\a}_\a$ . The Poisson center of $A$ is given by 
$$A^A=\{A^{A_\a}_\a\}_{\a\in G},$$ 



Let $H$ be a Hopf $G$-coalgebra and $A$ a right $H$-comodule algebra. An  $(A,H)$-Hopf module is a right $H$-comodule $M$ with the action of $A_\a$ on $M_\a$ such that for all $\a,\b\in G, a\in A_{\a\b}, m\in M_{\a\b},$
$$(a\cdot m)_{(0,\a)}\o (a\cdot m)_{(1,\b)}=a_{(0,\a)}\cdot m_{(0,\b)}\o a_{(1,\b)} m_{(1,\b)}.$$

\section{The category $_{\mathcal{P}A}\mathcal{M}^H$}
\def\theequation{2.\arabic{equation}}
\setcounter{equation} {0}

\begin{definition}\label{Def:2a}
Let $A$ be a $G$-Poisson algebra. The family of spaces $M=\{M_\a\}_{\a\in G}$ is called a Poisson $A$-module if $(M,\cdot)$ is an $A$-module and $(M,\di)$  is a Lie $A$-module satisfying the following conditions
\begin{align}
  a\di(b\cdot m) &=\{a,a'\}\cdot m+b\cdot(a\di m),\label{2a} \\
  (ab)\di m &=a\cdot(b\di m)+b\cdot(a\di m),\label{2b}
\end{align}
for all $a, b\in A_{\a}, m\in M_\a.$
\end{definition}

We denote by $_{\mathcal{P}_A}\mathcal{M}$ the category of Poisson $A$-modules with the morphisms of $A$-modules and Lie $A$-modules.

\begin{definition}\label{Def:2b}
Let $A$ be a $G$-Poisson algebra and $H$ a Hopf $G$-coalgebra. $A$ is called a right $H$-comodule Poisson algebra if  $A$ is right $H$-comodule algebra and 
\begin{equation}
  \{a,b\}_{(0,\a)}\o \{a,b\}_{(1,\b)}=\{a_{(0,\a)},b_{(0,\a)}\}\o a_{(1,\b)}b_{(1,\b)},\label{2c}
\end{equation}
for all $a,b\in A_{\a\b}$.
\end{definition}

\begin{definition}\label{Def:2c}
Let $H$ be a Hopf  $G$-coalgebra and $A$ a right $H$-comodule Poisson algebra. The family of vector spaces $M=\{M_\a\}_{\a\in G}$ is called a Poisson $(A,H)$-Hopf module if $M$ is a $(A,H)$-Hopf module and a Poisson $A$-module such that  
\begin{equation}
(a\di m)_{(0,\a)}\o(a\di m)_{(1,\b)}=a_{(0,\a)}\di m_{(0,\a)}\o a_{(1,\b)} m_{(1,\b)},\label{2d}
\end{equation}
for all $a\in A_{\a\b}, m\in M_{\a\b}.$
\end{definition}

\begin{remark}
  It is obvious that the right $H$-comodule Poisson algebra $A$ itself is a Poisson $(A,H)$-Hopf module.
\end{remark}

We denote by $_{\mathcal{P}_A}\mathcal{M}^H$ the category of Poisson $(A,H)$-Hopf modules with the morphisms of $A$-modules, Lie $A$-modules and $H$-comodules.  For objects $M,N\in\! _{\mathcal{P}_A}\mathcal{M}^H$, we denote by $_{\mathcal{P}A}$Hom$^H(M,N)$ the vector space of  morphisms in $_{\mathcal{P}A}\mathcal{M}^H$.

For a Poisson $(A,H)$-Hopf module $M$, we denote$M^{AcoH}_\a=M^{coH}_\a\bigcap M^{A_\a}_\a$ and $M^{AcoH}=\left\{M^{AcoH}_\a\right\}_{\a\in G}.$

\begin{lemma}\label{Lem:2d}
Let $H$ be a Hopf $G$-coalgebra with a bijective antipode, $A$ a right $H$-comodule Poisson algebra and $M$ a Poisson $(A,H)$-Hopf module. Then 
\begin{itemize}
  \item [(1)] $M^A$ is a $H$-subcomodule of $M$.
  \item [(2)] $A^A$ is a $H$-subcomodule Poisson algebra of $A$.
  \item [(3)]  $A^{AcoH}$ is a $H$-subcomodule Poisson algebra of $A^A$.
  \item [(4)] $M^{AcoH}$ is a Poisson $A^{AcoH}$-submodule of $M$.
\end{itemize}
\end{lemma}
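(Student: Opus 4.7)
My plan is to verify each of the four assertions directly from the definitions, using the compatibility relations (\ref{2c}) and (\ref{2d}), the coaction axioms, and the bijective antipode of $H$ where needed to isolate Sweedler components.

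For part (1), take $m \in M^A_{\a\b}$, so that $a \di m = 0$ for every $a \in A_{\a\b}$. Applying $\rho_{\a,\b}$ to this identity and invoking (\ref{2d}) yields
\[
a_{(0,\a)} \di m_{(0,\a)} \o a_{(1,\b)} m_{(1,\b)} = 0 \qquad \text{in}\ M_\a \o H_\b,\ \text{for every}\ a \in A_{\a\b}.
\]
To conclude that the Sweedler components of $\rho_{\a,\b}(m)$ land in $M^A_\a$, I would fix an arbitrary $a' \in A_\a$, expand $\rho_{\a,\b}(m) = \sum_i m^i \o h_i$ with $\{h_i\}$ linearly independent in $H_\b$, and use the bijective antipode to construct elements of $A_{\a\b}$ whose coactions generate enough rank-one tensors of the form $a' \o h$ to force $a' \di m^i = 0$ for each $i$. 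Part (2) follows by a parallel argument, replacing (\ref{2d}) with (\ref{2c}) and the Lie action by the Poisson bracket; the additional assertion that $A^A$ is a Poisson subalgebra is automatic (Leibniz rule for the product, triviality of the bracket on the center).

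Part (3) then combines (2) with the standard fact (see \cite{V1}) that $A^{coH}$ is a sub-comodule algebra of $A$: since $A^{AcoH}_\a = A^A_\a \cap A^{coH}_\a$ and the coaction preserves each piece, it preserves the intersection, and closure under product and bracket is inherited from $A^A$. For part (4), one checks that $M^{AcoH}$ is closed under both actions of $A^{AcoH}$. For $a \in A^{AcoH}_\a$ and $m \in M^{AcoH}_\a$, stability of $a \cdot m$ under coinvariance is the standard Hopf-module argument (the coaction, being an algebra and module map, sends products of coinvariants to coinvariants); stability under any Lie action $b \di (a \cdot m)$ with $b \in A_\a$ follows from (\ref{2a}):
\[
b \di (a \cdot m) = \{b,a\} \cdot m + a \cdot (b \di m) = 0,
\]
since $\{b,a\}=0$ ($a$ is Poisson-central) and $b \di m = 0$ ($m$ is Poisson-invariant). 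The Lie action $a \di m$ itself already vanishes by the definition of $M^{A_\a}_\a$, so there is nothing to verify for it.

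The main obstacle is the Sweedler-extraction step in parts (1) and (2): the compatibility identities give the combined equation in a single stroke, but decoupling the $M_\a$- (resp.\ $A_\a$-)factor from $H_\b$ to reach a coordinate-wise conclusion is the delicate point, and is where the bijectivity of $S$ enters in an essential way.
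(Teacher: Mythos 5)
Your parts (2)--(4) are fine and agree with what the paper leaves as ``straightforward'': the subcomodule claims reduce to part (1), closure of the center under product and bracket is the Leibniz rule, and your computation $b\di(a\cdot m)=\{b,a\}\cdot m+a\cdot(b\di m)=0$ for $a\in A^{AcoH}_\a$, $m\in M^{AcoH}_\a$ is exactly the right verification for (4). The problem is the one step you yourself flag as delicate, the Sweedler extraction in (1): the mechanism you describe would not work. From $a\di m=0$ for all $a\in A_{\a\b}$ you correctly get $a_{(0,\a)}\di m_{(0,\a)}\o a_{(1,\b)}m_{(1,\b)}=0$, but you then propose to ``construct elements of $A_{\a\b}$ whose coactions generate enough rank-one tensors of the form $a'\o h$.'' The set $\{\r_{\a,\b}(\tilde a):\tilde a\in A_{\a\b}\}$ is a linear subspace of $A_\a\o H_\b$ of dimension at most $\dim A_{\a\b}$; it does not contain the rank-one tensors $a'\o h$ for arbitrary $a'\in A_\a$ (already for $A=H$ the image of $\Delta$ contains no such tensors), and no amount of linear combination of coactions escapes this subspace. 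So fixing $a'\in A_\a$ and trying to realize $a'\o h$ inside the image of the coaction is a dead end.

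The paper's argument uses the bijective antipode differently, and this is the idea your sketch is missing: for $a'\in A_\a$ one coacts \emph{upward}, $\r_{\a\b,\b^{-1}}(a')=a'_{(0,\a\b)}\o a'_{(1,\b^{-1})}\in A_{\a\b}\o H_{\b^{-1}}$, uses the hypothesis there (each $a'_{(0,\a\b)}\di m=0$), applies $\r_{\a,\b}$, and then multiplies the $H_\b$-leg on the left by $S^{-1}_\b$ of the remaining leg. Concretely,
\begin{equation*}
a'\di m_{(0,\a)}\o m_{(1,\b)}=\bigl(1\o S^{-1}_{\b}(a'_{(2,\b^{-1})})\bigr)\cdot\bigl[a'_{(0,\a)}\di m_{(0,\a)}\o a'_{(1,\b)} m_{(1,\b)}\bigr],
\end{equation*}
by coassociativity and the antipode axiom, and the right-hand side vanishes because it is obtained from the zero elements $\r_{\a,\b}(a'_{(0,\a\b)}\di m)$ by a linear operation. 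Writing $\r_{\a,\b}(m)=\sum_i m^i\o h_i$ with the $h_i$ linearly independent then gives $a'\di m^i=0$ for every $i$ and every $a'\in A_\a$, i.e.\ $m^i\in M^A_\a$. The essential point is that the antipode enters as a post-composed multiplication on the $H_\b$-factor, not as a device for producing rank-one tensors in the image of the coaction; with that correction the rest of your plan goes through and coincides with the paper's proof.
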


\begin{proof}
(1) For all $a\in A_\a, m\in M^{A_{\a\b}}_{\a\b},$
$$a\di m_{(0,\a)}\o m_{(1,\b)}=(1_e\o S^{-1}_{\b}(a_{(1,\b^{-1})}))\cdot[(a_{(0,\a\b)}\di m)_{(0,\a)}\o(a_{(0,\a\b)}\di m)_{(1,\b)}].$$
By a similar argument as in \cite[Lemma 1.5]{Gu}, we obtain $a\di m_{(0,\a)}=0$ for each summand $m_{(0,\a)}$. Hence $m_{(0,\a)}\in M^A_\a$. That is, $M^A$ is a $H$-subcomodule of $M$.

The rest of the proof is straightforward, analogue to the proof of \cite[Lemma 1.5]{Gu}.
\end{proof}

\begin{definition}\label{Def:2e}
Let $H$ be a Hopf $G$-coalgebra, and $A$ an $H$-comodule Poisson algebra. A family of vector spaces $M=\{M_\a\}_{\a\in G}$ is called an $(\underline{A},H)$-comodule if $M$ is a left Lie $A$-module, a right $H$-comodule and the relation (\ref{2d}) is satisfied.
\end{definition}

We denote by $_{\underline{A}}\mathcal{M}$ the category of Lie $A$-module with the morphisms of Lie $A$-modules, and $_{\underline{A}}\mathcal{M}^H$ the category of $(\underline{A},H)$-comodules with the morphisms of $A$-modules, Lie $A$-modules and $H$-comodules.

\begin{lemma}\label{Lem:2f}
  (1) Let $N=\{N_\a\}_{\a\in G}$ be a Lie $A$-module. Then 
$$N\o H=\left\{(N\o H)_{\a}\Big{|}(N\o H)_{\a}=\prod\limits_{\mu\nu=\a}N_\mu\o H_\nu\right\}_{\a\in G}$$ 
is an $(\underline{A},H)$-comodule with the $H$-coaction  and the Lie $A$-action given by
\begin{align*}
& (n\o h)_{(0,\mu')}\o  (n\o h)_{(1,\nu')} =n\o h_{(1,\mu^{-1}\mu')}\o h_{(2,\nu')},  \\
&a\di(n\o h)=a_{(0,\mu)}\di n\o a_{(1,\nu)}h,
\end{align*}
for all $a\in A_{\a},n\o h\in N_\mu\o H_\nu$ with $\mu\nu=\mu'\nu'=\a$.

(2) Furthermore if $N=\{N_\a\}_{\a\in G}$ be a Poisson $A$-module and $H$ is commutative, then $N\o H$ is a Poisson $(A,H)$-Hopf module with the $A$-action given by 
 $$a\cdot(n\o h)=a_{(0,\mu)}\cdot n\o a_{(1,\nu)}h,$$
for all $a\in A_{\a},n\o h\in N_\mu\o H_\nu$ with $\mu\nu=\a$.
\end{lemma}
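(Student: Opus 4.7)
The plan is to verify each axiom for $N\o H$ by direct computation, relying on the coalgebra axioms of $H$, the $H$-comodule algebra structure of $A$, the (Lie or Poisson) module axioms on $N$, and the compatibility (\ref{2c}).

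For part (1), I would first check that $N\o H$ with the prescribed coaction is a right $H$-comodule: coassociativity reduces to the identity $(id\o\D_{\cdot,\cdot})\D_{\cdot,\cdot}=(\D_{\cdot,\cdot}\o id)\D_{\cdot,\cdot}$ applied to the $H$-tensorand, and counitality is immediate from the counit axiom of $H$. Next, the Lie $A$-module axiom is verified by expanding $\{a,b\}\di(n\o h)$: relation (\ref{2c}) rewrites $\{a,b\}_{(0,\mu)}\o\{a,b\}_{(1,\nu)}$ as $\{a_{(0,\mu)},b_{(0,\mu)}\}\o a_{(1,\nu)}b_{(1,\nu)}$, so the Lie axiom on $N$ splits the $N$-factor into the commutator $a_{(0,\mu)}\di(b_{(0,\mu)}\di n)-b_{(0,\mu)}\di(a_{(0,\mu)}\di n)$, which recombines with the $H$-factor to give $a\di(b\di(n\o h))-b\di(a\di(n\o h))$. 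Finally, the compatibility (\ref{2d}) is obtained by applying the coaction to $a_{(0,\mu)}\di n\o a_{(1,\nu)}h$, using that each $\D_{\cdot,\cdot}$ is an algebra map on $H$ to split $\D(a_{(1,\nu)}h)$, and then invoking coassociativity of the $A$-coaction to match the right-hand side $a_{(0,\a)}\di(n\o h)_{(0,\a)}\o a_{(1,\b)}(n\o h)_{(1,\b)}$.

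For part (2), I would additionally verify: the $A$-module axioms for the prescribed $\cdot$-action (via the $H$-comodule algebra property of $A$ combined with the $A$-module axiom on $N$), the Hopf-module compatibility for $\cdot$ (the same pattern as for $\di$ above, with (\ref{2c}) replaced by the multiplicativity of the $A$-coaction), and the two Poisson identities (\ref{2a}) and (\ref{2b}). Expanding either Poisson identity produces matching terms on the $N$-factor by the Poisson axioms on $N$, but the accompanying products in $H$ appear in opposite orders, namely $a_{(1,\nu)}b_{(1,\nu)}$ on one side and $b_{(1,\nu)}a_{(1,\nu)}$ on the other; the commutativity of $H$ is exactly what equates them, which is where the extra hypothesis in (2) enters.

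The main obstacle will be purely notational: keeping track of the indices $\mu,\nu,\mu',\nu',\a,\b$ and their products while simultaneously invoking the coassociativities of $H$ as a coalgebra and of $A$ as an $H$-comodule algebra. Once a grading convention is fixed, the verification reduces to routine substitutions driven by (\ref{2c}) and the algebra-map property of $\D_{\cdot,\cdot}$, with no deeper idea required.
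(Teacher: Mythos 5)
Your proposal is correct and follows essentially the same route as the paper: a direct verification of each axiom, using (\ref{2c}) to split the bracket in the Lie-module check, the algebra-map property of $\Delta_{\cdot,\cdot}$ together with coassociativity of the $A$-coaction for the compatibility (\ref{2d}), and the commutativity of $H$ exactly where you locate it, namely to reorder $a_{(1,\nu)}a'_{(1,\nu)}$ against $a'_{(1,\nu)}a_{(1,\nu)}$ in the two Poisson identities. The only difference is that you also propose to verify the comodule and $A$-module axioms explicitly, which the paper treats as obvious; otherwise the computations coincide.
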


\begin{proof}
  (1) For all $a,a'\in H_{\a},n\in N_\mu,h\in H_\nu$ with $\mu\nu=\a$,
\begin{align*}
  &\{a,a'\}\di(n\o h)\\
  &= \{a,a'\}_{(0,\mu)}\di n\o \{a,a'\}_{(1,\nu)}h\\
  &=\{a_{(0,\mu)},a'_{(0,\mu)}\}\di n\o a_{(1,\nu)}a'_{(1,\nu)}h\\
  &=\left[(a_{(0,\mu)}\di(a'_{(0,\mu)}\di n)-a'_{(0,\mu)}\di(a_{(0,\mu)}\di n)   \right]\o  a_{(1,\nu)}a'_{(1,\nu)}h\\
  &=a_{(0,\mu)}\di(a'_{(0,\nu)}\di n)\o a_{(1,\mu)}a'_{(1,\nu)}h-a'_{(0,\nu)}\di(a_{(0,\mu)}\di n)\o a_{(1,\mu)}a'_{(1,\nu)}h\\
  &=a\di(a'\di(n\o h))-a'\di(a\di(n\o h)),
\end{align*}
that is, $N\o H$ is a Lie $A$-module. And
\begin{align*}
&(a\di(n\o h))_{(0,\mu')}\o(a\di(n\o h))_{(1,\nu')}\\
&=(a_{(0,\mu)}\di n\o a_{(1,\nu)}h)_{(0,\mu')}\o (a_{(0,\mu)}\di n\o a_{(1,\nu)}h)_{(1,\nu')}\\
&=a_{(0,\mu)}\di n\o a_{(1,\mu^{-1}\mu')}h_{(1,\mu^{-1}\mu')}\o a_{(2,\nu')}h_{(2,\nu')}\\
&=a_{(0,\mu')}\di(n\o h_{(1,\mu^{-1}\mu')} )\o a_{(1,\nu')}h_{(2,\nu')}\\
&=a_{(0,\mu')}\di(n\o h)_{(0,\mu')}\o a_{(1,\nu')}(n\o h)_{(1,\nu')},
\end{align*}
where $\mu'\nu'=\a$. Therefore $N\o H$ is an $(\underline{A},H)$-comodule.

(2) Obviously $N\o H$ is an $A$-module.  For all $a,a'\in H_{\a},n\in N_\mu,h\in H_\nu$ with $\mu\nu=\a$,
\begin{align*}
&a\di(a'\cdot (n\o h))\\
&=a_{(0,\mu)}\di(a'_{(0,\mu)}\cdot n)\o a_{(1,\nu)}a'_{(1,\nu)}h\\
&=\left[\{a_{(0,\mu)},a'_{(0,\mu)}\}\cdot n+a'_{(0,\mu)}\cdot(a_{(0,\mu)}\di n)\right]\o a_{(1,\nu)}a'_{(1,\nu)}h\\
&=\{a_{(0,\mu)},a'_{(0,\mu)}\}\cdot n\o a_{(1,\nu)}a'_{(1,\nu)}h+a'_{(0,\mu)}\cdot(a_{(0,\mu)}\di n)\o a_{(1,\nu)}a'_{(1,\nu)}h\\
&=\{a,a'\}_{(0,\mu)}\cdot n\o \{a,a'\}_{(1,\nu)}h+a'_{(0,\mu)}\cdot(a_{(0,\mu)}\di n)\o a'_{(1,\nu)}a_{(1,\nu)}h\\
&=\{a,a'\}\cdot(n\o h)+a'\cdot(a\di (n\o h)),
\end{align*}
and 
\begin{align*}
&aa'\di (n\o h)\\
&=(aa')_{(0,\mu)}\di n\o (aa')_{(1,\nu)}h\\
&=a_{(0,\mu)}a'_{(0,\mu)}\di n\o a_{(1,\nu)}a'_{(1,\nu)}h\\
&=a_{(0,\mu)}\cdot(a'_{(0,\mu)}\di n)\o a_{(1,\nu)}a'_{(1,\nu)}h+a'_{(0,\mu)}\cdot(a_{(0,\mu)}\di n)\o a_{(1,\nu)}a'_{(1,\nu)}h\\
&=a\cdot(a'\di (n\o h))+a'\cdot(a\di (n\o h)).
\end{align*}
Thus $N\o H$ is a Poisson $(A,H)$-Hopf module. The proof is completed.
\end{proof}

\begin{remark}
  By the above Lemma, if $H$ is commutative, then $A\o H$ is a Poisson $(A,H)$-Hopf module with the following actions and coaction
  \begin{align*}
& (a\o h)_{(0,\mu')}\o  (a\o h)_{(1,\nu')} =a\o h_{(1,\mu^{-1}\mu')}\o h_{(2,\nu')},  \\
&a\di(a_0\o h)=\{a_{(0,\mu)},a_0\}\o a_{(1,\nu)}h,\\
&a\cdot(a_0\o h)=a_{(0,\mu)}\cdot a_0\o a_{(1,\nu)}h,
  \end{align*}
for all $a\in A_{\a},a\o h\in A_\mu\o H_\nu$ with $\mu\nu=\mu'\nu'=\a$.
\end{remark}

Now denote $\pi_{\mu,\nu}:(N\o H)_\a\rightarrow N_\mu\o H_\nu$ the canonical projection with $\mu\nu=\a$.

\begin{lemma}\label{Lem:2g}
(i) Let $M$ be an $(\underline{A},H)$-comodule and $N$ a Poisson $A$-module. There exists a linear isomorphism
\begin{align*}
\gamma:&\quad\!_{\underline{A}}Hom^H(M,N\o H)\rightarrow\! _{\underline{A}}Hom(M,N),\\
&f=\{f_\a\}_{\a\in G}\mapsto\{(id\o\varepsilon)\circ \pi_{\a,e}\circ f_\a\}_{\a\in G}.
\end{align*}

(ii) Let $H$ be commutative, $M$ a Poisson $(A, H)$-Hopf module and $N$ a Poisson $A$-module. There exists a linear isomorphism
\begin{align*}
\gamma:&\quad\!_{\underline{A}}Hom^H(M,N\o H)\rightarrow\! _{\underline{A}}Hom(M,N),\\
&f=\{f_\a\}_{\a\in G}\mapsto\{(id\o\varepsilon)\circ \pi_{\a,e}\circ f_\a\}_{\a\in G}.
\end{align*}
\end{lemma}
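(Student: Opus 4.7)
The plan is to exhibit an explicit inverse $\delta$ to $\gamma$ and verify the two composition identities. Given $g=\{g_\a\}\in {}_{\underline{A}}\mathrm{Hom}(M,N)$, I would define $\delta(g):M\to N\o H$ componentwise: for $m\in M_\a$ and every decomposition $\a=\mu\nu$, declare the $(\mu,\nu)$-component of $\delta(g)_\a(m)$ in $N_\mu\o H_\nu$ to be $g_\mu(m_{(0,\mu)})\o m_{(1,\nu)}$, using the coaction $\r_{\mu,\nu}$ on $M$. So by definition $\pi_{\mu,\nu}\circ\delta(g)_\a=(g_\mu\o id_{H_\nu})\circ\r_{\mu,\nu}$.

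Step 1: verify $\delta(g)$ is $H$-colinear. For $m\in M_{\a\b}$ and a decomposition $\a\b=\mu\nu$, compute $\r_{\mu,\nu}\bigl(\delta(g)_{\a\b}(m)\bigr)$ using the explicit coaction on $N\o H$ from Lemma~\ref{Lem:2f}(1); the identity reduces, after expanding the $(\mu,\nu)$-component and re-indexing the sum over decompositions of $\a$, to coassociativity of $\r$ on $M$ applied to $m$.

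Step 2: verify $\delta(g)$ is a Lie $A$-module map. For $a\in A_\a$ and $m\in M_\a$, I would compute the $(\mu,\nu)$-component of both $\delta(g)_\a(a\di m)$ and $a\di \delta(g)_\a(m)$; using condition (\ref{2d}) on $M$ and the formula $a\di(n\o h)=a_{(0,\mu)}\di n\o a_{(1,\nu)}h$ on $N\o H$, both sides reduce to $g_\mu(a_{(0,\mu)}\di m_{(0,\mu)})\o a_{(1,\nu)}m_{(1,\nu)}$, using that $g_\mu$ is Lie $A_\mu$-linear.

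Step 3: $\gamma\circ\delta=\mathrm{id}$ is a one-line computation: $(id\o\v)\pi_{\a,e}\delta(g)_\a(m)=g_\a(m_{(0,\a)})\v(m_{(1,e)})=g_\a(m)$ by the counit axiom on the comodule $M$. Step 4: for $\delta\circ\gamma=\mathrm{id}$ on a colinear $f$, use $H$-colinearity $\r_{\mu,\nu}\circ f_{\mu\nu}=(f_\mu\o id_{H_\nu})\circ\r_{\mu,\nu}$ to express the $(\mu,\nu)$-component of $f_\a(m)$ in terms of $f_\mu(m_{(0,\mu)})$; projecting to the $(\mu,e)$-slot of $f_\mu(m_{(0,\mu)})$ and applying $id\o\v$ shows the $(\mu,\nu)$-component of $\delta(\gamma(f))_\a(m)$ coincides with that of $f_\a(m)$. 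For part (ii) the same four steps work verbatim: commutativity of $H$ and Lemma~\ref{Lem:2f}(2) simply upgrade $N\o H$ to a Poisson $(A,H)$-Hopf module, but since the morphism space under consideration is still ${}_{\underline{A}}\mathrm{Hom}^H$, no extra compatibility with the associative $A$-action on $M$ needs to be verified.

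The main obstacle I anticipate is purely notational: keeping the group bookkeeping straight when unpacking the product description $(N\o H)_\a=\prod_{\mu\nu=\a}N_\mu\o H_\nu$ and the resulting coaction $\r_{\mu,\nu}$ on $N\o H$, in particular making sure the sub-decomposition $h_{(1,(\mu'')^{-1}\mu)}\o h_{(2,\nu)}$ that appears in step 1 is the correct one demanded by coassociativity. Once this grading is tracked carefully, each verification collapses to a single application of a standard axiom (coassociativity, counit, Lie-colinearity), which is why I would not grind through the componentwise computations explicitly.
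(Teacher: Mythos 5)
Your proposal is correct and follows essentially the same route as the paper: the paper's inverse $\g'(g)=\overline{g}$ with $\overline{g}_\a=\prod_{\mu\nu=\a}(g_\mu\o id_\nu)\circ\r_{\mu,\nu}$ is exactly your $\delta$, and the four verifications you outline (colinearity and Lie $A$-linearity of $\delta(g)$, then the two composite identities via the counit axiom and $H$-colinearity of $f$) are precisely the ones carried out there, with part (ii) likewise requiring no additional work. The only step you leave implicit is the routine check that $\g(f)$ itself is Lie $A$-linear, which the paper does explicitly using $a_{(0,\a)}\v(a_{(1,e)})=a$; otherwise nothing is missing.
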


\begin{proof}
(1) For $f\in\!_{\underline{A}}Hom^H(M,N\o H)$ and $(m_\a)\in M$, set 
$$f_\a(m_\a)=\prod_{\mu\nu=\a}\left(\sum_{i}n^\mu_i\o h^\nu_i\right),$$
for all $m_\a\in M_\a,n^\mu_i\in N_\mu, h^\nu_i\in  H_\nu$. Then 
$$\g(f)_\a(m_\a)=\sum_{i}n^\a_i\varepsilon(h^e_i).$$
For $a\in A_\a,$ we have
$$f_\a(a\di m_\a)=a\di f_\a(m_\a)=\sum_{\mu\nu=\a}\sum_{i}a_{(0,\mu)}\di n^\mu_i\o a_{(1,\nu)}h^\nu_i.$$
Since
\begin{align*}
\g(f)_\a(a\di m_\a)&=\sum_{i}a_{(0,\a)}\di n^\a_i\varepsilon(a_{(1,e)}h^e_i)\\
&=\sum_{i}a\di n^\a_i\varepsilon(h^e_i)=a\di (\sum_{i}n^\a_i\varepsilon(h^e_i))\\
&=a\di \g(f)_\a(m_\a),
\end{align*}
$\g(f)$ is a Lie $A$-linear. For $g\in\! _{\underline{A}}Hom(M,N)$, define $\overline{g}:M\rightarrow N\o H$ by
$$\overline{g}_\a=\prod_{\mu\nu=\a}((g_\mu\o id_\nu)\circ\r_{\mu,\nu}).$$
Since for all $a\in A_\a,m\in M_{\a\b}$,
\begin{align*}
&\overline{g}_{\a\b}(m)_{(0,\a)}\o\overline{g}_{\a\b}(m)_{(1,\b)}\\
&=\prod_{\mu\nu=\a\b}((g_{\mu}(m_{(0,\mu)})\o m_{(1,\nu)})_{(0,\a)}\o(g_{\mu}(m_{(0,\mu)})\o m_{(1,\nu)})_{(1,\b)})\\
&=\prod_{\mu\nu=\a\b}(g_{\mu}(m_{(0,\mu)})\o  (m_{(1,\nu)})_{(1,\mu^{-1}\a)}\o (m_{(1,\nu)})_{(2,\b)})\\
&=\prod_{\mu\nu=\a\b}(g_{\mu}(m_{(0,\mu)})\o  m_{(1,\mu^{-1}\a)}\o m_{(2,\b)})\\
&=\prod_{\mu\nu=\a}(g_{\mu}(m_{(0,\mu)})\o  m_{(1,\nu)}\o m_{(2,\b)})\\
&=\overline{g}_{\a}(m_{(0,\a)})\o m_{(1,\b)},
\end{align*}
and 
\begin{align*}
\overline{g}_{\a}(a\di m_\a)&=\prod_{\mu\nu=\a}(g_\mu(a\di m_{\a})_{(0,\mu)}\o (a\di m_{\a})_{(1,\nu)})\\
&=\prod_{\mu\nu=\a}(g_\mu(a_{(0,\mu)}\di m_{\a(0,\mu)})\o a_{(1,\nu)} m_{\a(1,\nu)})\\
&=\prod_{\mu\nu=\a}(a_{(0,\mu)}\di g_\mu(m_{\a(0,\mu)})\o a_{(1,\nu)} m_{\a(1,\nu)})\\
&=a\di\prod_{\mu\nu=\a}(g_\mu(m_{\a(0,\mu)})\o m_{\a(1,\nu)})\\
&=a\di \overline{g}_{\a}(m_\a),
\end{align*}
we obtain that $\overline{g}$ is a morphism in $_{\underline{A}}Hom^H(M,N\o H)$. We denote $\gamma'(g)=\overline{g}$, and we have a morphism $\g': \! _{\underline{A}}Hom(M,N)\rightarrow\!_{\underline{A}}Hom^H(M,N\o H)$.  For all $g\in \! _{\underline{A}}Hom(M,N)$,
\begin{align*}
\g(\g'(g))_\a(m_\a)&=[(id\o\varepsilon)\circ \pi_{\a,e}\circ \g'(g)_\a](m_\a)\\
&=[(id\o\varepsilon)\circ \pi_{\a,e}]\left(\prod_{\mu\nu=\a}(g_\mu(m_{\a(0,\mu)})\o  m_{\a(1,\nu)})\right)\\
&=(id\o\varepsilon)(g_\a(m_{\a(0,\a)})\o  m_{\a(1,e)})\\
&=g_\a(m_\a),
\end{align*}
so $\g\circ\g'=id_{ _{\underline{A}}Hom(M,N)}$. Note that if $f\in\!_{\underline{A}}Hom^H(M,N\o H),$ for all $m_\a\in M_\a$,
$$f_\a(m_\a)_{(0,\mu')}\o f_\a(m_\a)_{(1,\nu')}=\prod_{\mu\nu=\a}\left(\sum_{i}n^\mu_i\o h^{\nu}_{i(1,\mu^{-1}\mu')}\o h^{\nu}_{i(2,\nu')}\right).$$
then
\begin{align*}
\g'(\g(f))_\a(m_\a)&=\prod_{\mu\nu=\a}\left(\g(f)_\mu(m_{\a(0,\mu)})\o m_{\a(1,\nu)}\right)\\
&=\prod_{\mu\nu=\a}\left(\g(f)_\mu(m_\a)_{(0,\mu)}\o \g(f)_\mu(m_\a)_{(1,\nu)}\right)\\
&=\prod_{\mu\nu=\a}(id\o\varepsilon)\left(\sum_{i}n^\mu_i\o h^{\nu}_{i(1,e)}\right)\o h^{\nu}_{i(2,\nu)}\\
&=\prod_{\mu\nu=\a}\left(\sum_{i}n^\mu_i\o h^{\nu}_{i}\right)\\
&=f_\a(m_\a),
\end{align*}
so $\g'\circ\g=id_{ _{\underline{A}}Hom^H(M,N\o H)}$. The proof is completed.
\end{proof}

By Lemma \ref{Lem:2g}, we could directly get

\begin{corollary}\label{coro:2h}
Let $H$ be a Hopf $G$-algebra and $A$  an $H$-comodule Poisson algebra.

 (1) If $N$ is an injective Poisson $A$-module, then $N\o H$ is an injective $(\underline{A}, H)$-comodule.

(2) Let $H$ commutative. If $N$ is an injective Poisson $A$-module, then $N\o H$ is an injective Poisson $(A, H)$-Hopf module.
\end{corollary}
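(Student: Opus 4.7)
The plan is to deduce both statements from Lemma \ref{Lem:2g} via the general principle that a right-adjoint-like construction preserves injectivity. In part (1) the natural bijection $\g:{}_{\ul A}\mathrm{Hom}^H(-,\,N\o H)\cong{}_{\ul A}\mathrm{Hom}(-,\,N)$ reduces testing the injectivity of $N\o H$ in $_{\ul A}\mathcal{M}^H$ to testing the injectivity of $N$ in $_{\ul A}\mathcal{M}$, and similarly for part (2) using the Poisson Hom-functors.

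Concretely, I would begin with a monomorphism $j:M_1\hookrightarrow M_2$ in $_{\ul A}\mathcal{M}^H$ and a morphism $f:M_1\to N\o H$ in the same category. Apply $\g$ to transport $f$ to $\g(f):M_1\to N$ in $_{\ul A}\mathcal{M}$; since $j$ is still a monomorphism after forgetting the $H$-coaction, the injectivity of $N$ supplies an extension $g:M_2\to N$ with $g\circ j=\g(f)$. Setting $\tilde f:=\g'(g)$, where $\g'$ is the inverse constructed in the proof of Lemma \ref{Lem:2g}, the naturality of $\g$ in its first argument---manifest from the formula $\g(h)_\a=(\mathrm{id}\o\v)\circ\pi_{\a,e}\circ h_\a$---yields $\g(\tilde f\circ j)=g\circ j=\g(f)$, hence $\tilde f\circ j=f$ by injectivity of $\g$. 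This witnesses the injectivity of $N\o H$ in $_{\ul A}\mathcal{M}^H$.

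Part (2) runs identically, now invoking Lemma \ref{Lem:2g}(ii) together with the Poisson $(A,H)$-Hopf module structure that Lemma \ref{Lem:2f}(2) puts on $N\o H$; the commutativity of $H$ enters at precisely this point. The only routine verification is that forgetting the $H$-coaction sends monomorphisms in $_{\mathcal{P}_A}\mathcal{M}^H$ to monomorphisms in $_{\mathcal{P}_A}\mathcal{M}$, which is immediate.

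The one genuinely delicate point I anticipate is the interpretation of ``injective Poisson $A$-module'' in part (1): the source of $\g$ in Lemma \ref{Lem:2g}(i) is the Lie $A$-module Hom rather than the Poisson one, so what the argument actually uses is injectivity of $N$ with respect to Lie $A$-linear maps. I would either sharpen the hypothesis to injectivity in $_{\ul A}\mathcal{M}$ or invoke the forgetful functor $_{\mathcal{P}_A}\mathcal{M}\to{}_{\ul A}\mathcal{M}$ together with its left adjoint to transfer the injectivity. Beyond reconciling this, the corollary is a pure formal transport across $\g$ and no further calculation is needed.
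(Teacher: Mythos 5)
Your argument is correct and is exactly what the paper intends: the paper derives this corollary directly from Lemma \ref{Lem:2g} with no further detail, and your lifting of an extension across the natural isomorphism $\gamma$ (using its naturality in the first argument and bijectivity) is the standard way to make that derivation explicit. Your closing observation about part (1) is also well taken: what the argument actually uses, and what the paper's own application in Theorem \ref{the:2i}(1) supplies, is injectivity of $N$ as an object of ${}_{\underline{A}}\mathcal{M}$ (a Lie $A$-module), so the hypothesis ``injective Poisson $A$-module'' in part (1) should be read, or sharpened, accordingly.
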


\begin{theorem}\label{the:2i}
Let $H$ be a Hopf $G$-algebra and $A$  an $H$-comodule Poisson algebra. Suppose that there is an $H$-colinear map $\p:H\rightarrow A^A$ such that for all $\a\in G$, $\p_\a(1_{H_\a})=1_{A_\a}$.

(1) If $H$ is commutative or  $\phi$ is an algebra map, then every Poisson $(A, H)$-Hopf module which is injective as a Lie $A$-module is an injective $(\underline{A}, H)$-comodule.

(2) If $H$ is commutative and $\phi$ is an algebra map, then every Poisson $(A, H)$-Hopf module which is injective as a Poisson $A$-module is an injective Poisson $(A, H)$-Hopf module.
\end{theorem}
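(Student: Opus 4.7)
The plan is to show, in each case, that $M$ is a direct summand of $M\otimes H$ in the relevant category, and then to invoke Corollary~\ref{coro:2h}: for part~(1), $M\otimes H$ is injective in $_{\underline{A}}\mathcal{M}^H$ since $M$ is injective as a Lie $A$-module, while for part~(2) Lemma~\ref{Lem:2f}(2) upgrades $M\otimes H$ to a Poisson $(A,H)$-Hopf module and Corollary~\ref{coro:2h}(2) makes it injective in $_{\mathcal{P}_A}\mathcal{M}^H$. Since direct summands of injectives are injective, this will yield the theorem in both parts.

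The split monomorphism I have in mind is the canonical coaction $\iota:M\to M\otimes H$, $\iota_\a(m)=\prod_{\mu\nu=\a}(m_{(0,\mu)}\otimes m_{(1,\nu)})$: by Lemma~\ref{Lem:2g}(i), $\iota$ is the image of $\mathrm{id}_M$ under $\gamma'$ and hence is a morphism in the relevant category.

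The retraction $\pi:M\otimes H\to M$ is modeled on Doi's classical splitting $\pi(m\otimes h)=m_{(0)}\cdot\phi(S(m_{(1)})h)$, written out componentwise on $(M\otimes H)_\a=\prod_{\mu\nu=\a}M_\mu\otimes H_\nu$. Its graded formula must combine the coaction $\rho_{\mu,\nu}$ on $M$, the indexed antipodes $S_?:H_?\to H_{?^{-1}}$, the comodule algebra coaction $\rho^A$ on $A$ applied to $\phi$-values (to move $\phi(h)\in A_\nu$ into a scalar in $A_\a$ via $\phi$-colinearity), and the multiplications inside each $H_\b$ and $A_\b$. With the right indexing, $\pi\circ\iota=\mathrm{id}_M$ reduces to the graded antipode axiom together with $\phi_\a(1_{H_\a})=1_{A_\a}$. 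Lie $A$-linearity of $\pi$ hinges on the Poisson-centrality $\phi(H)\subseteq A^A$: since $\{\phi(k),a\}=0$, the action of $\phi(k)$ commutes with the Lie $A$-action on $M$, so the $a$-dependent $H$-factors produced by~\eqref{2d} are absorbed via the graded antipode identity. The disjunctive hypothesis in part~(1) enters precisely where a product inside some $H_\b$ must be carried through $\phi$: if $\phi$ is an algebra map one distributes, while if $H$ is commutative one rearranges inside $H_\b$ first, so only linearity and colinearity of $\phi$ are ultimately used. For part~(2), the additional check of $A$-linearity of $\pi$ runs parallel: commutativity of $A$ lets $\phi(k)$ commute with the $A$-action on $M$, and $\phi$ being an algebra map collapses products of $H$-elements inside $\phi$. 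The $H$-colinearity of $\pi$ uses the anti-comultiplicativity of the antipode together with $\phi$-colinearity.

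The principal technical obstacle is the graded bookkeeping: because the antipode $S_\a:H_\a\to H_{\a^{-1}}$ reverses the $G$-grading and no multiplication is defined across different $H_\b$'s, the classical one-line formula for $\pi$ must be distributed over $\prod_{\mu\nu=\a}M_\mu\otimes H_\nu$, and one must exploit the comodule algebra coaction on $A$ (not just on $M$) to realign degrees so that every scalar produced by $\phi$ lands in $A_\a$ and acts on the correct graded piece $M_\a$. Once this careful indexing is in place, each verification reduces to a graded analogue of Doi's original calculation.
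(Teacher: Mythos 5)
Your proposal takes essentially the same route as the paper: realize $M$ as a direct summand of $M\otimes H$ via the coaction $\rho^M$, construct a Doi-type retraction from $\phi$ and the (inverse) antipode, verify it is a morphism in $_{\underline{A}}\mathcal{M}^H$ (resp.\ $_{\mathcal{P}_A}\mathcal{M}^H$) using the Poisson-centrality of $\phi(H)$ and the disjunctive hypothesis exactly where you say, and conclude by Corollary~\ref{coro:2h}. The only divergence is in the graded bookkeeping you flag as the main obstacle: the paper's retraction $\l_\a\bigl((m_\mu\o h_\nu)_{\mu\nu=\a}\bigr)=\p_\a\bigl(h_\a S^{-1}_\a(m_{e(1,\a^{-1})})\bigr)\cdot m_{e(0,\a)}$ simply projects onto the $(e,\a)$-component of $\prod_{\mu\nu=\a}M_\mu\o H_\nu$ rather than distributing the formula over all components, which is the cleaner resolution of the indexing issue you anticipate.
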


\begin{proof}
(1) Let $M$ be a Poisson $(A,H)$-Hopf module and an injective Lie $A$-module. Consider the $k$-linear map $\l:M\o H\rightarrow M$ given by
$$\l_\a((m_\mu\o h_\nu)_{\mu\nu=\a})=\p_\a(h_\a S^{-1}_\a(m_{e(1,\a^{-1})}))\cdot m_{e(0,\a)},$$
where $(m_\mu\o h_\nu)_{\mu\nu=\a}\in (M\o H)_\a.$ Let $\r^M_\a=\prod\limits_{\mu\nu=\a}\r_{\mu,\nu}$, then for all $m_\a\in M_\a$,
\begin{align*}
&(\l_\a\circ\r^M_\a)(m_\a)=\l_\a((m_{(0,\mu)}\o m_{(1,\nu)})_{\mu\nu=\a})\\
&=\p_\a(m_{\a(1,\a)}S^{-1}_\a(m_{\a(0,e)(1,\a^{-1})})\cdot m_{\a(0,e)(0,\a)}\\
&=\p_\a(m_{\a(2,\a)}S^{-1}_\a(m_{\a(1,\a^{-1})})\cdot m_{\a(0,\a)}\\
&=\p_\a(1_{H_\a})\cdot m_\a=m_\a,
\end{align*}
thus $\l\circ \r^M=id_M$, i.e., $\r^M$ is an injective map. Let $m\o h=(m_\mu\o h_\nu)_{\mu\nu=\a}\in (M\o H)_\a$,  for $\mu'\nu'=\a$, we have
\begin{align*}
&\l_\a(m\o h)_{(0,\mu')}\o\l_\a(m\o h)_{(1,\nu')}\\
&=\p_\a(h_\a S^{-1}_\a(m_{e(2,\a^{-1})}))_{(0,\mu')}\cdot m_{e(0,\mu')}\o \p_\a(h_\a S^{-1}_\a(m_{e(2,\a^{-1})}))_{(1,\nu')}m_{e(1,\nu')}\\
&=\p_{\mu'}(h_{\a(1,\mu')} S^{-1}_{\mu'}(m_{e(3,\mu'^{-1})}))\cdot m_{e(0,\mu')}\o h_{\a(2,\nu')} S^{-1}_{\nu'}(m_{e(2,\nu'^{-1})})m_{e(1,\nu')}\\
&=\p_{\mu'}(h_{\a(1,\mu')} S^{-1}_{\mu'}(m_{e(1,\mu'^{-1})}))\cdot m_{e(0,\mu')}\o h_{\a(2,\nu')}\\
&=(\l_{\mu'}\o id)(((m_\mu\o h_{\nu(1,\mu^{-1}\mu')}))\o h_{\nu(2,\nu')} )  \\
&=\l_{\mu'}((m\o h)_{(0,\mu')})\o (m\o h)_{(1,\nu')},
\end{align*}
that is, $\l$ is $H$-colinear. And on one hand,
\begin{align*}
\l_\a(a\di (m\o h))&=\l_\a((a_{(0,\mu)}\di m_\mu\o a_{(1,\nu)}h_\nu))\\
&=\p_\a(a_{(1,\a)}h_\a S^{-1}_\a((a_{(0,e)}\di m_e)_{(1,\a^{-1})}))\cdot (a_{(0,e)}\di m_e)_{(0,\a)}\\
&=\p_\a(a_{(2,\a)}h_\a S^{-1}_\a((a_{(1,\a^{-1})}m_{e(1,\a^{-1})}))\cdot (a_{(0,\a)}\di m_{(0,\a)}).
\end{align*}
On the other hand,
\begin{align*}
&a\di\l_\a(m\o h)\\
&=a\di [\p_\a(h_\a S^{-1}_\a(m_{e(1,\a^{-1})}))\cdot m_{e(0,\a)}]\\
&=\{a\di \p_\a(h_\a S^{-1}_\a(m_{e(1,\a^{-1})}))\}\cdot m_{e(0,\a)}+\p_\a(h_\a S^{-1}_\a(m_{e(1,\a^{-1})}))\cdot(a\di m_{e(0,\a)})\\
&=\p_\a(h_\a S^{-1}_\a(m_{e(1,\a^{-1})}))\cdot(a\di m_{e(0,\a)}),
\end{align*}
where the third identity holds since $\p_\a(h_\a S^{-1}_\a(m_{e(1,\a^{-1})}))\in A^{A_\a}_\a$. Now when $H$ is commutative or $\p$ is an algebra map, we obtain
$$\l_\a(a\di (m\o h))=a\di\l_\a(m\o h),$$
thus $\l$ is Lie $A$-linear. Therefore $\l$ is a morphism in $_{\underline{A}}\mathcal{M}^H$. By \ref{coro:2h}, $M\o H$ is an injective $(\underline{A},H)$-comodule. It is straightforward to check that the comudule structure map $\r^M$ is also a morphism in $_{\underline{A}}\mathcal{M}^H$. So $M$ is a direct sum of $M\o H$ as an object in $_{\underline{A}}\mathcal{M}^H$, which implies that $M$ is an injective $(\underline{A},H)$-comodule. 

(2) By a similar computations, we obtain that 
$$\l_\a(a\cdot (m\o h))=\p_\a(a_{(2,\a)}h_\a S^{-1}_\a((a_{(1,\a^{-1})}m_{e(1,\a^{-1})}))\cdot (a_{(0,\a)}\cdot m_{(0,\a)}),$$
and
$$a\cdot\l_\a(m\o h)=\p_\a(h_\a S^{-1}_\a(m_{e(1,\a^{-1})}))\cdot(a\cdot m_{e(0,\a)}).$$
Also by a similar analysis, we obtain that $M$ is an injective Poisson $(A,H)$-Hopf module. 
\end{proof}

\section{Fundamental theorem of Poisson Hopf modules}
\def\theequation{3.\arabic{equation}}
\setcounter{equation} {0}

In what follows, we will always denote $B=A^{AcoH}$. 

\begin{lemma}\label{lem:2j}
Let $M$ be a Poisson $(A,H)$-Hopf module. Then 
$$A\o_B M^{AcoH}=\{A_\a\o_{B_\a} M^{AcoH}_\a\}_{\a\in G}$$ 
is a Poisson $(A,H)$-Hopf module with the following structures:
\begin{align*}
&a'\cdot (a\o_{B_\a} m_\a)=a'a\o_{B_\a} m_\a,\\
&a'\di(a\o_{B_\a} m_\a)=\{a',a\}\o_{B_\a} m_\a,\\
&(a\o_{B_\a} m_{\a})_{(0,\mu)}\o(a\o m_\a)_{(1,\nu)}=a_{(0,\mu)}\o_{B_\mu} m_\mu\o a_{(1,\nu)},
\end{align*}
for all $\a,\mu,\nu\in G,\mu\nu=\a,$ and $a,a'\in A_\a, m=(m_\a)\in M^{AcoH}$.
\end{lemma}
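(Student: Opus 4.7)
The plan is to verify, in order: (i) well-definedness of each operation modulo the tensor relation over $B_\a$, (ii) the $A$-module, Lie $A$-module and $H$-comodule axioms, (iii) the Poisson compatibility relations \eqref{2a}--\eqref{2b}, and (iv) the Hopf-module compatibilities for both the action $\c$ and the Lie action $\di$, in particular \eqref{2d}.

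Well-definedness is where the hypotheses on $B=A^{AcoH}$ are actually used, and is the main technical point. For $b\in B_\a$ I would check that $a'\di(ab\o m_\a)=a'\di(a\o b\c m_\a)$ and similarly for the coaction. The Lie-action case uses the Leibniz rule $\{a',ab\}=\{a',a\}b+a\{a',b\}$, where the second term vanishes because $B_\a\subseteq A_\a^{A_\a}$; then $\{a',a\}b\o m_\a=\{a',a\}\o b\c m_\a$ by the tensor relation. The coaction case uses that $b$ is coinvariant, so $b_{(0,\mu)}\o b_{(1,\nu)}=b_\mu\o 1_\nu$; then
\[
(ab)_{(0,\mu)}\o m_\mu\o (ab)_{(1,\nu)}=a_{(0,\mu)}b_\mu\o m_\mu\o a_{(1,\nu)}=a_{(0,\mu)}\o b_\mu\c m_\mu\o a_{(1,\nu)},
\]
which equals $a_{(0,\mu)}\o(b\c m)_\mu\o a_{(1,\nu)}$ since $B\c M^{AcoH}\subseteq M^{AcoH}$ and the action is componentwise (Lemma \ref{Lem:2d}(4)). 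Associativity of $\c$ descends obviously.

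Once the operations are well defined, the axioms themselves are largely formal consequences of the corresponding identities in $A$. The Lie $A$-module identity $\{a,a'\}\di x=a\di(a'\di x)-a'\di(a\di x)$ for $x=a''\o m$ is exactly the Jacobi identity for $\{-,-\}$ in $A_\a$, tensored with $m$; relations \eqref{2a}--\eqref{2b} reduce to the Leibniz identities in the Poisson algebra $A_\a$, e.g.
\[
a\di\bigl(a'\c(a''\o m)\bigr)=\{a,a'a''\}\o m=\bigl(\{a,a'\}a''+a'\{a,a''\}\bigr)\o m=\{a,a'\}\c x+a'\c(a\di x).
\]
The $H$-comodule axioms of $A\o_B M^{AcoH}$ follow from those of $A$, using that the second tensor factor $m_\mu$ is inserted componentwise from the coinvariant family $m$.

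Finally I turn to the Hopf-module compatibilities. For $\c$, one expands both sides of the defining identity of an $(A,H)$-Hopf module on $a\c(a'\o m)=aa'\o m$ and applies the comodule-algebra axiom for $A$. For $\di$, I would expand both sides of \eqref{2d} with $x=a'\o m$: using \eqref{2c} in $A$,
\[
\{a,a'\}_{(0,\mu)}\o m_\mu\o\{a,a'\}_{(1,\nu)}=\{a_{(0,\mu)},a'_{(0,\mu)}\}\o m_\mu\o a_{(1,\nu)}a'_{(1,\nu)},
\]
which coincides with $a_{(0,\mu)}\di(a'\o m)_{(0,\mu)}\o a_{(1,\nu)}(a'\o m)_{(1,\nu)}$ after expanding the coaction on $a'\o m$ via the formula in the statement. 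The only place the hypothesis $m\in M^{AcoH}$ (rather than just $m\in M^{coH}$) is essential is in the well-definedness step above; the remaining checks would go through for any coinvariant family, so I expect the verification to be essentially mechanical once the first paragraph is in place.
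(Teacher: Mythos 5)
Your proposal is correct and follows essentially the same route as the paper: well-definedness of the Lie action via the Leibniz rule together with $B_\a\subseteq A_\a^{A_\a}$, the Poisson-module axioms \eqref{2a}--\eqref{2b} reduced to Leibniz identities in $A_\a$, and the compatibility \eqref{2d} via \eqref{2c}. The only difference is one of completeness: the paper delegates the underlying $(A,H)$-Hopf module verification (well-definedness of the $A$-action and $H$-coaction and their axioms) to the reference \cite{Li}, whereas you spell those checks out explicitly.
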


\begin{proof}
By a similar verification as in \cite{Li}, $A\o_B M^{AcoH}$ is an $(A,H)$-Hopf module. For all $a,a',a''\in A_\a,b\in B_\a$ and $m\in M^{AcoH}$,
\begin{align*}
a'\di (ab\o_{B_\a} m_\a)&=\{a',ab\}\o_{B_\a} m_\a\\
&=[\{a',a\}b+a\{a',b\}]\o_{B_\a} m_\a\\
&=\{a',a\}b\o_{B_\a} m_\a\\
&=\{a',a\}\o_{B_\a} b\cdot m_\a\\
&=a'\di(a\o_{B_\a} b\cdot m_\a),
\end{align*}
so the Lie $A$-action on $A\o_B M^{AcoH}$ is well defined. 
\begin{align*}
a''\di[a'\cdot(a\o_{B_\a} m_\a)]&=a''\di(a'a\o_{B_\a} m_\a)\\
&=\{a'',a'a\}\o_{B_\a} m_\a\\
&=[\{a'',a\}a'+a\{a'',a'\}]\o_{B_\a} m_\a\\
&=a'\cdot[a''\di(a\o_{B_\a} m_\a)]+\{a'',a'\}\cdot(a\o_{B_\a} m_\a),
\end{align*}
and 
\begin{align*}
a'a''\di(a\o_{B_\a} m_\a)&=\{a'a'',a\}\o_{B_\a} m_\a\\
&=[\{a',a\}a''+a'\{a'',a\}]\o_{B_\a} m_\a\\
&=a''\cdot(\{a',a\}\o_{B_\a} m_\a)+a'\cdot(\{a'',a\}\o_{B_\a} m_\a)\\
&=a''\cdot[a'\di (a\o_{B_\a} m_\a)]+a'\cdot[a''\di(a\o_{B_\a} m_\a)],
\end{align*}
hence $A\o_B M^{AcoH}$ is a Poisson $A$-module. Moreover
\begin{align*}
&(a'\di(a\o_{B_\a} m_\a))_{(0,\mu)}\o(a'\di(a\o_{B_\a} m_\a))_{(1,\nu)}\\
&=(\{a',a\}\o_{B_\a} m_\a)_{(0,\mu)}\o(\{a',a\}\o_{B_\a} m_\a)_{(1,\nu)}\\
&=\{a',a\}_{(0,\mu)}\o_{B_\mu} m_\mu\o \{a',a\}_{(1,\nu)}\\
&=\{a'_{(0,\mu)},a_{(0,\mu)}\}\o_{B_\mu} m_\mu\o a'_{(1,\nu)}a_{(1,\nu)}\\
&=a'_{(0,\mu)}\di (a\o_{B_\a} m_\a)_{(0,\mu)}\o a'_{(1,\nu)}(a\o_{B_\a} m_\a)_{(1,\nu)}.
\end{align*}
Therefore $A\o_B M^{AcoH}$ is a Poisson $(A,H)$-Hopf module. The proof is completed.
\end{proof}

\begin{lemma}\label{lem:2k}
Let  $M$ be a Poisson $(A,H)$-Hopf module. Then the $k$-linear map $\Phi:A\o_B M^{AcoH}\rightarrow M$ given by
$$\Phi_\a(a\o m_\a)=a\cdot m_\a,$$
is a homomorphism of Poisson $(A,H)$-Hopf modules, where $a\in A_\a$ and $m=(m_\a)\in M^{AcoH}$.
\end{lemma}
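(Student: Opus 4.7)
The plan is to verify in turn that $\Phi$ (i) is well defined on the balanced tensor product, (ii) is $A$-linear, (iii) is Lie $A$-linear, and (iv) is $H$-colinear, using the $\a$-wise formula $\Phi_\a(a\o m_\a)=a\cdot m_\a$. Each verification reduces to a short calculation using the Poisson $(A,H)$-Hopf module axioms together with the defining property of $M^{AcoH}$.

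First I would check well-definedness: since by Lemma \ref{Lem:2d}(4) the space $M^{AcoH}$ is a Poisson $B$-submodule of $M$, for any $b\in B_\a$ and $m_\a\in M^{AcoH}_\a$ the associativity of the $A$-action gives $(ab)\cdot m_\a=a\cdot(b\cdot m_\a)$, so $\Phi_\a$ descends to $A_\a\o_{B_\a}M^{AcoH}_\a$. The $A$-linearity $\Phi_\a(a'\cdot(a\o_{B_\a}m_\a))=(a'a)\cdot m_\a=a'\cdot\Phi_\a(a\o_{B_\a}m_\a)$ is then immediate from the structure of $A\o_B M^{AcoH}$ given in Lemma \ref{lem:2j}.

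For Lie $A$-linearity, using (\ref{2a}) in Definition \ref{Def:2a} and the fact that $m_\a\in M^{AcoH}_\a\subseteq M^{A_\a}_\a$ (so $a'\di m_\a=0$), I compute
\begin{align*}
a'\di\Phi_\a(a\o_{B_\a}m_\a)&=a'\di(a\cdot m_\a)=\{a',a\}\cdot m_\a+a\cdot(a'\di m_\a)\\
&=\{a',a\}\cdot m_\a=\Phi_\a(\{a',a\}\o_{B_\a}m_\a)=\Phi_\a(a'\di(a\o_{B_\a}m_\a)),
\end{align*}
which is the desired equality. For $H$-colinearity, take $a\in A_{\a\b}$ and $m=(m_\g)\in M^{AcoH}$; using the $(A,H)$-Hopf module compatibility for $M$ and the coinvariance $\r_{\a,\b}(m_{\a\b})=m_\a\o 1_\b$, I obtain
\begin{align*}
\Phi_{\a\b}(a\o_{B_{\a\b}} m_{\a\b})_{(0,\a)}\o\Phi_{\a\b}(a\o_{B_{\a\b}} m_{\a\b})_{(1,\b)}
&=(a\cdot m_{\a\b})_{(0,\a)}\o(a\cdot m_{\a\b})_{(1,\b)}\\
&=a_{(0,\a)}\cdot m_{\a\b (0,\a)}\o a_{(1,\b)}m_{\a\b(1,\b)}\\
&=a_{(0,\a)}\cdot m_\a\o a_{(1,\b)}\\
&=\Phi_\a(a_{(0,\a)}\o_{B_\a} m_\a)\o a_{(1,\b)},
\end{align*}
which matches the coaction on $A\o_B M^{AcoH}$ described in Lemma \ref{lem:2j}.

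There is no real obstacle here; the only subtle point is remembering that $M^{AcoH}\subseteq M^A$, which is precisely what kills the extra term $a\cdot(a'\di m_\a)$ in the Lie-linearity check and is what makes the coinvariance $\r_{\a,\b}(m_{\a\b})=m_\a\o 1_\b$ available in the colinearity check. Assembling the four verifications yields the lemma.
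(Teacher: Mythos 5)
Your proof is correct and follows essentially the same route as the paper's: the paper also verifies $A$-linearity, Lie $A$-linearity via $\{a',a\}\cdot m_\a=a'\di(a\cdot m_\a)$ (using $a'\di m_\a=0$ for $m_\a\in M^{A_\a}_\a$), and $H$-colinearity via the coinvariance $\r_{\a,\b}(m_{\a\b})=m_\a\o 1_\b$. Your explicit treatment of well-definedness over $B_\a$ and of the vanishing term $a\cdot(a'\di m_\a)$ only makes explicit what the paper leaves implicit.
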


\begin{proof}
Clearly $\Phi$ is a map of $A$-module. For all $a,a'\in A_\a$ and $m=(m_\a)\in M^{AcoH}$,
\begin{align*}
&\Phi_\a(a'\di(a\o_{B_\a} m_\a))\\
&=\Phi_\a(\{a',a\}\o_{B_\a} m_\a)\\
&=\{a',a\}\cdot m_\a\\
&=a'\di\Phi_\a(a\o_{B_\a} m_\a),
\end{align*}
and 
\begin{align*}
&\Phi_\a(a\o_{B_\a} m_\a)_{(0,\mu)}\o \Phi_\a(a\o_{B_\a} m_\a)_{(1,\nu)}\\
&=(a\cdot m_\a)_{(0,\mu)}\o(a\cdot m_\a)_{(1,\nu)}\\
&=a_{(0,\mu)}\cdot m_{\a(0,\mu)}\o a_{(1,\nu)} m_{\a(1,\nu)}\\
&=a_{(0,\mu)}\cdot m_\mu\o a_{(1,\nu)}\\
&=\Phi_\mu((a\o_{B_\a} m_\a)_{(0,\mu)})\o (a\o_{B_\a} m_\a)_{(1,\nu)},
\end{align*}
which means that $\Phi$ is a morphism of $H$-comodules.
\end{proof}

Assume that there exists a map of right $H$-comodules $\phi:H\rightarrow A$ such that $\p_\a(1_{H_\a})=1_{A_\a}$. For any  object $M$ in $_{\mathcal{P}_A}\mathcal{M}^H$, consider the $k$-linear map
$$p^M_\a:M_e\rightarrow M_\a,\ p^M_\a(m)=\p_\a(S^{-1}_\a(m_{(1,\a^{-1})}))\cdot m_{(0,\a)},$$
for all $m\in M_e$.

\begin{lemma}\label{lem:2l}
Let $p^M_\a$ defined as above. Then we have $\left\{(p^M_\a(m))_{\a\in G}|m\in M_e\right\}= M^{coH}$.
\end{lemma}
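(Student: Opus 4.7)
The plan is to verify both inclusions of the claimed equality, which is an equality of subsets of $\prod_{\a\in G}M_\a$.

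For the inclusion $M^{coH}\subseteq\{(p^M_\a(m))_{\a\in G}\mid m\in M_e\}$, I take an arbitrary coinvariant family $(n_\a)_{\a\in G}\in M^{coH}$ and specialize the defining relation $\r_{\a,\b}(n_{\a\b})=n_\a\o 1_{H_\b}$ to $\b=\a^{-1}$, obtaining the identity $(n_e)_{(0,\a)}\o(n_e)_{(1,\a^{-1})}=n_\a\o 1_{H_{\a^{-1}}}$ in $M_\a\o H_{\a^{-1}}$. Applying the linear map $(v,h)\mapsto\p_\a(S^{-1}_\a(h))\cdot v$ to both sides yields $p^M_\a(n_e)=\p_\a(S^{-1}_\a(1_{H_{\a^{-1}}}))\cdot n_\a=\p_\a(1_{H_\a})\cdot n_\a=n_\a$, where I use $S^{-1}_\a(1_{H_{\a^{-1}}})=1_{H_\a}$ (a consequence of the antipode axiom applied to $1_e$ together with the fact that $\D_{\a^{-1},\a}$ is an algebra map) and the standing hypothesis $\p_\a(1_{H_\a})=1_{A_\a}$. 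Hence $(n_\a)_{\a\in G}=(p^M_\a(n_e))_{\a\in G}$.

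For the reverse inclusion I need to show that $(p^M_\a(m))_{\a\in G}$ is coinvariant for every $m\in M_e$, i.e., $\r_{\a,\b}(p^M_{\a\b}(m))=p^M_\a(m)\o 1_{H_\b}$ for all $\a,\b\in G$. First I apply $\r_{\a,\b}$ to the formula for $p^M_{\a\b}(m)$ using the $(A,H)$-Hopf module compatibility; then the $H$-colinearity of $\p$ combined with the $S^{-1}$-coproduct formula $\D_{\a,\b}(S^{-1}_{\a\b}(k))=S^{-1}_\a(k_{(2,\a^{-1})})\o S^{-1}_\b(k_{(1,\b^{-1})})$ (derivable from the stated identity $\D_{\b^{-1},\a^{-1}}S_{\a\b}=\sigma(S_\a\o S_\b)\D_{\a,\b}$ by exploiting anti-multiplicativity of $S^{-1}$) reduces the problem to manipulating an element of $M_\a\o H_\b\o H_{\b^{-1}}\o H_{\a^{-1}}$ built from four Sweedler factors of $m$.

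The crucial rewrite uses two successive coassociativities. First, coassociativity of the $H$-coaction on $m\in M_e$ (applied to the decompositions $e=(\a\b)\cdot(\a\b)^{-1}=\a\cdot(\b\cdot(\a\b)^{-1})$) converts Sweedler factors indexed by $\a\b$ and $(\a\b)^{-1}$ into factors indexed by $\a$ and $\a^{-1}$. Second, coassociativity of $\D$ on $H_{\a^{-1}}$ (applied to $\a^{-1}=(\b\b^{-1})\cdot\a^{-1}=e\cdot\a^{-1}$) lets me rewrite the resulting element of $H_\b\o H_{\b^{-1}}\o H_{\a^{-1}}$ as $u_{(1,\b)}\o u_{(2,\b^{-1})}\o v$, where $u\o v:=\D_{e,\a^{-1}}(m_{(1,\a^{-1})})\in H_e\o H_{\a^{-1}}$. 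The antipode--counit identity $S^{-1}_\b(u_{(2,\b^{-1})})u_{(1,\b)}=\varepsilon(u)1_{H_\b}$ for $u\in H_e$ (obtained by applying $S^{-1}_{\b^{-1}}$ to the stated antipode axiom and using anti-multiplicativity of $S^{-1}_{\b^{-1}}$) collapses the second tensor factor to $\varepsilon(u)1_{H_\b}$, after which the counit axiom $(\varepsilon\o id)\D_{e,\a^{-1}}=id$ turns $\varepsilon(u)v$ back into $m_{(1,\a^{-1})}$, producing exactly $p^M_\a(m)\o 1_{H_\b}$.

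The main obstacle is the Sweedler-index bookkeeping: three independent rearrangements (coassociativity of the $M$-coaction on $m$, the twist from $\D\circ S^{-1}$, and coassociativity of $\D$ on $H_{\a^{-1}}$) must be composed in precisely the right order so that the antipode--counit collapse applies to an element of $H_e$. No single axiom does the work alone, and the correct sequencing only becomes apparent once the target expression $p^M_\a(m)\o 1_{H_\b}$ is written out; once the chain of identifications is in place, every step is a direct application of the Hopf $G$-coalgebra axioms recalled in Section~1.
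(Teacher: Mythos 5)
Your proof is correct: both inclusions are verified as claimed, and the auxiliary identities you invoke ($S^{-1}_\a(1_{H_{\a^{-1}}})=1_{H_\a}$, the formula for $\D_{\a,\b}\circ S^{-1}_{\a\b}$ obtained from $\D_{\b^{-1},\a^{-1}}S_{\a\b}=\sigma(S_\a\o S_\b)\D_{\a,\b}$, and the collapse $S^{-1}_\b(u_{(2,\b^{-1})})u_{(1,\b)}=\varepsilon(u)1_{H_\b}$ for $u\in H_e$) all do follow from the Hopf $G$-coalgebra axioms in the way you describe. The paper gives no proof here (it is "left to the reader"), and your computation is exactly the standard verification being omitted, so there is no divergence of approach to report.
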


\begin{proof}
The verification is straightforward and left to the reader.
%
\end{proof}

Let $M\in\!_{\mathcal{P}_A}\mathcal{M}^H$. Define the action of Lie algebra $A_e$ on $M^{coH}_\a$ by
$$a\di' p^M_\a(m)=p^M_\a(a\di m),$$
for all $a\in A_e,m\in M_e$.

\begin{lemma}\label{lem:2m}
\begin{itemize}
  \item [(1)]  For all $a\in A_e, m\in M_e$, 
  $$p^M_\a(a\cdot m)=p^A_\a(a)\cdot p^M_\a(m),\ \ p^M_\a(a\di p^M_e(m))=p^M_\a(a\di m).$$
  \item [(2)] For all $a\in A_\a, m\in M_e$,  
  $$a\di p^M_\a( m)=\p_\a(a_{(1,\a)})\cdot(a_{(0,e)}\di'p^M_\a( m)).$$
  \item [(3)] For all $a,c\in A_e, m\in M_e$, 
  $$\{a,c\}\di' p^M_\a( m)=a\di'(c\di' p^M_\a( m))-c\di'(a\di'p^M_\a( m)).$$
  \item [(4)] For all $m\in M_\a$,   
  $$\p_\a(m_{(1,\a)})\cdot\p^M_\a(m_{(0,e)})=m.$$
\end{itemize}
\end{lemma}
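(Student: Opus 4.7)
The plan is to verify the four identities in order, in each case unwinding the definition
\[
p^M_\a(m) = \p_\a(S^{-1}_\a(m_{(1,\a^{-1})}))\c m_{(0,\a)}
\]
and applying, in combination, the comodule algebra axiom, the Poisson module Leibniz rules (\ref{2a})--(\ref{2b}), the Poisson $(A,H)$-Hopf module compatibility (\ref{2d}), coassociativity, the anti-multiplicativity of $S_\a$, and the antipode and counit axioms of Section 1. Throughout, I will lean on the standing assumption (inherited from the main theorem of this section) that $\p:H\ra A^A$ is a right $H$-colinear algebra map, so that each $\p_\a(S^{-1}_\a(h))$ lies in the Poisson center $A^{A_\a}_\a$ and Poisson brackets against such elements vanish.

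For the first identity of (1), the comodule algebra axiom rewrites $(a\c m)_{(1,\a^{-1})} = a_{(1,\a^{-1})}m_{(1,\a^{-1})}$, and anti-multiplicativity of $S^{-1}_\a$ together with $\p_\a$ being an algebra map splits $\p_\a(S^{-1}_\a(a_{(1,\a^{-1})}m_{(1,\a^{-1})}))$ into $\p_\a(S^{-1}_\a(m_{(1,\a^{-1})}))\p_\a(S^{-1}_\a(a_{(1,\a^{-1})}))$, which reassembles as $p^A_\a(a)\c p^M_\a(m)$. For the second identity of (1), I apply (\ref{2a}) to $a\di p^M_e(m)$: since $\p_e(S^{-1}_e(m_{(1,e)}))\in A^{A_e}_e$ the bracket term vanishes, leaving $\p_e(S^{-1}_e(m_{(1,e)}))\c(a\di m_{(0,e)})$; applying $p^M_\a$, using $H$-colinearity of $\p$, iterated coassociativity on $m$, and collapsing via the antipode axiom then recovers $p^M_\a(a\di m)$.

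For (2), expanding $a\di p^M_\a(m)$ via (\ref{2a}) leaves only the term $\p_\a(S^{-1}_\a(m_{(1,\a^{-1})}))\c(a\di m_{(0,\a)})$, because $\p_\a(S^{-1}_\a(m_{(1,\a^{-1})}))$ is Poisson-central. On the other side, unfolding
\[
\p_\a(a_{(1,\a)})\c(a_{(0,e)}\di' p^M_\a(m)) = \p_\a(a_{(1,\a)})\c p^M_\a(a_{(0,e)}\di m)
\]
and using (\ref{2d}) to distribute the coaction over $a_{(0,e)}\di m$ produces the same expression, once one invokes the iterated coassociativity identity $\r_{\a,\a^{-1}}(a_{(0,e)})\o a_{(1,\a)} = a_{(0,\a)}\o\D_{\a^{-1},\a}(a_{(1,e)})$ together with the antipode and counit axioms. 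Part (3) is then a purely formal consequence: using $a\di' p^M_\a(m) = p^M_\a(a\di m)$ and the Lie module axiom for $\di$, one expands $\{a,c\}\di' p^M_\a(m)$ as $p^M_\a(a\di(c\di m)) - p^M_\a(c\di(a\di m))$ and applies the second identity of (1) twice to rewrite each summand as an iterated $\di'$-action.

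Part (4) is the content-rich identity and is the main obstacle. Unfolding $\p_\a(m_{(1,\a)})\c p^M_\a(m_{(0,e)}) = \p_\a(m_{(1,\a)})\c\p_\a(S^{-1}_\a(m_{(0,e)(1,\a^{-1})}))\c m_{(0,e)(0,\a)}$, and using $\p$ being an algebra map together with commutativity of $H$ (or centrality of its image) to merge the two $\p_\a$-factors into $\p_\a(m_{(1,\a)}S^{-1}_\a(m_{(0,e)(1,\a^{-1})}))$, I then substitute the iterated coassociativity identity
\[
m_{(0,e)(0,\a)}\o m_{(0,e)(1,\a^{-1})}\o m_{(1,\a)} = m_{(0,\a)}\o\D_{\a^{-1},\a}(m_{(1,e)})
\]
to rewrite everything in terms of $\D_{\a^{-1},\a}(m_{(1,e)})$. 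The $S^{-1}$-form of the antipode axiom then collapses $m_{(1,e)(2,\a)}S^{-1}_\a(m_{(1,e)(1,\a^{-1})})$ to $\v(m_{(1,e)})1_\a$, and since $\p_\a(1_\a) = 1_{A_\a}$, the remaining expression $\v(m_{(1,e)})m_{(0,\a)}$ equals $m$ by the counit axiom $(id_\a\o\v)\r_{\a,e} = id_\a$. The principal difficulty throughout is the bookkeeping of group-graded Sweedler subscripts and correctly deriving the $S^{-1}$-variant of the antipode identity from the $S$-variants stated in Section 1.
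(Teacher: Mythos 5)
Your proposal is correct and follows essentially the same route as the paper's proof: unwinding the definition of $p^M_\a$, using the comodule-algebra and Hopf-module compatibilities, anti-multiplicativity of $S^{-1}_\a$, Poisson-centrality of the image of $\p$ to kill the bracket terms, and coassociativity plus the antipode/counit axioms (your explicit derivation of the $S^{-1}$-variant of the antipode identity for part (4) fills in a step the paper dismisses as straightforward). The only cosmetic differences are that the paper verifies (2) starting from the right-hand side and that you make explicit the standing hypotheses on $\p$ (algebra map with central image) that the paper uses implicitly.
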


\begin{proof}
(1) For all $a\in A_e, m\in M_e$, 
\begin{align*}
p^M_\a(a\cdot m)& =\p_\a(S^{-1}_\a((a\cdot m)_{(1,\a^{-1})}))\cdot (a\cdot m)_{(0,\a)} \\
& =\p_\a(S^{-1}_\a(a_{(1,\a^{-1})} m_{(1,\a^{-1})}))a_{(0,\a)}\cdot m_{(0,\a)} \\
&=\p_\a(S^{-1}_\a(m_{(1,\a^{-1})})S^{-1}_\a(a_{(1,\a^{-1})} ))a_{(0,\a)}\cdot m_{(0,\a)} \\
&=\p_\a(S^{-1}_\a(a_{(1,\a^{-1})} ))a_{(0,\a)}\p_\a(S^{-1}_\a(m_{(1,\a^{-1})}))\cdot m_{(0,\a)} \\
&=p^A_\a(a)\cdot p^M_\a(m),
\end{align*}
and 
\begin{align*}
&p^M_\a(a\di p^M_e(m))\\
&=p^M_\a(a\di(\p_e(S^{-1}_e(m_{(1,e)}))\cdot m_{(0,e)}))\\
&=p^M_\a\left((a\di \p_e(S^{-1}_e(m_{(1,e)})))\cdot m_{(0,e)}+\p_e(S^{-1}_e(m_{(1,e)}))\cdot (a\di m_{(0,e)})\right)\\
&=p^M_\a\left(\p_e(S^{-1}_e(m_{(1,e)}))\cdot (a\di m_{(0,e)})\right)\\
&=p^A_\a(\p_e(S^{-1}_e(m_{(1,e)})))\cdot p^M_\a(a\di m_{(0,e)})\\
&=\p_\a(S^{-1}_\a(\p_e(S^{-1}_e(m_{(1,e)})))_{(1,\a^{-1})})\p_e(S^{-1}_e(m_{(1,e)}))_{(0,\a)}\\
&\quad\p_\a(S^{-1}_\a(a\di m_{(0,e)})_{(1,\a^{-1})})\cdot  (a\di m_{(0,e)})_{(0,\a)}\\
&=\p_\a(S^{-1}_\a(S^{-1}_e(m_{(2,e)})_{(2,\a^{-1})}))\p_\a(S^{-1}_e(m_{(2,e)})_{(1,\a)})\\
&\quad\p_\a(S^{-1}_\a(a_{(1,\a^{-1})} m_{(1,\a^{-1})}))\cdot  (a_{(0,\a)}\di m_{(0,\a)})\\
&=\p_\a(S^{-1}_\a(S^{-1}_{\a^{-1}}(m_{(2,\a)})))\p_\a(S^{-1}_\a(m_{(3,\a^{-1})}))\p_\a(S^{-1}_\a(a_{(1,\a^{-1})} m_{(1,\a^{-1})}))\cdot  (a_{(0,\a)}\di m_{(0,\a)})\\
&=\p_\a(S^{-1}_\a(a_{(1,\a^{-1})} m_{(1,\a^{-1})}))\cdot  (a_{(0,\a)}\di m_{(0,\a)})\\
&=p^M_\a(a\di m).
\end{align*}

(2) For all $a\in A_\a, m\in M_e$,
\begin{align*}
&\p_\a(a_{(1,\a)})\cdot(a_{(0,e)}\di'p^M_\a( m))\\
&=\p_\a(a_{(1,\a)})\cdot p^M_\a(a_{(0,e)}\di p^M_\a( m))\\
&=\p_\a(a_{(1,\a)}) \p_\a(S^{-1}_\a((a_{(0,e)}\di p^M_\a( m))_{(1,\a^{-1})}))\cdot (a_{(0,e)}\di p^M_\a( m))_{(0,\a)}\\
&=\p_\a(a_{(2,\a)}) \p_\a(S^{-1}_\a(a_{(1,\a^{-1})}p^M_\a(m)_{(1,\a^{-1})}))\cdot (a_{(0,\a)}\di p^M_\a(m)_{(0,\a)})\\
&=\p_\a(a_{(2,\a)}) \p_\a(S^{-1}_\a(a_{(1,\a^{-1})}))\cdot (a_{(0,\a)}\di p^M_\a(m))\\
&=a\di p^M_\a(m),
\end{align*}
where the  third identity  uses Lemma \ref{lem:2l}.

(3) For all $a,c\in A_e, m\in M_e$, 
\begin{align*}
\{a,c\}\di' p^M_\a(m)&=p^M_\a(\{a,c\}\di m)\\
&=p^M_\a(a\di (c\di m)-c\di(a\di m))\\
&=p^M_\a(a\di (c\di m))-p^M_\a(c\di(a\di m))\\
&=a\di' p^M_\a(c\di m)-c\di' p^M_\a(a\di m)\\
&=a\di' (c\di' p^M_\a(m))-c\di' (a\di' p^M_\a(m)).
\end{align*}

(4) The verification is straightforward.
\end{proof}

\begin{remark}
For all $a,c\in A_e, m\in M_e$, since
\begin{align*}
&(a\di' p^M_{\a\b}(m))_{(0,\a)}\o (a\di' p^M_{\a\b}(m))_{(1,\b)}\\
&=p^M_{\a\b}(a\di m)_{(0,\a)}\o p^M_{\a\b}(a\di m)_{(1,\b)}\\
&=p^M_{\a}(a\di m)\o1_{H_\b}=a\di' p^M_{\a}(m)\o1_{H_\b},
\end{align*}
it follows from Lemma \ref{lem:2m} (3) that for every Poisson $(A,H)$-Hopf module, $M^{coH}$ is a Lie $A_e$-module under the action $\di'$. Therefore $A^{coH}$  is also a Lie $A_e$-module under the action $\di'$. 
\end{remark}

\begin{lemma}\label{lem:2n}
Let  $M$ be a Poisson $(A,H)$-Hopf module. Assume that there exists a map of right $H$-comodules $\phi:H\rightarrow A$ which is also an algebra map.
\begin{itemize}
  \item [(1)] If the action $\di'$ of $A_e$ on $M^{coH}$ is trivial, then $M^{AcoH}=M^{coH}$.
  \item [(2)] If the action $\di'$ of $A_e$ on $A^{coH}$ is trivial, then $A^{AcoH}=A^{coH}$.
\end{itemize}
\end{lemma}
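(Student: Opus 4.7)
The plan is to reduce both parts to a short computation built out of Lemma~\ref{lem:2l}, which identifies $M^{coH}_\a$ with the image of $p^M_\a:M_e\to M_\a$, together with Lemma~\ref{lem:2m}(2), which converts the Lie $A_\a$-action $\di$ on an element of the form $p^M_\a(m)$ into the auxiliary $A_e$-action $\di'$ on $M^{coH}$.

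For part (1), the inclusion $M^{AcoH}\subseteq M^{coH}$ is immediate from the definition $M^{AcoH}_\a=M^{coH}_\a\cap M^{A_\a}_\a$. For the reverse inclusion, I pick an arbitrary $n\in M^{coH}_\a$ and, by Lemma~\ref{lem:2l}, write $n=p^M_\a(m)$ for some $m\in M_e$. Then for any $b\in A_\a$, Lemma~\ref{lem:2m}(2) delivers
$$b\di n=\p_\a(b_{(1,\a)})\cdot\bigl(b_{(0,e)}\di' p^M_\a(m)\bigr).$$
Since $p^M_\a(m)\in M^{coH}$ and $\di'$ acts trivially on $M^{coH}$ by hypothesis, the factor in parentheses vanishes, so $b\di n=0$ for all $b\in A_\a$. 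Hence $n\in M^{A_\a}_\a$, giving $M^{coH}_\a\subseteq M^{AcoH}_\a$ and thus equality.

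For part (2), I apply part (1) to $M=A$: the remark following Definition~\ref{Def:2c} records that $A$ is itself a Poisson $(A,H)$-Hopf module, and the notations $A^{coH}$ and $A^{AcoH}$ introduced at the level of the ambient algebra coincide with those arising from viewing $A$ as an object of $_{\mathcal{P}_A}\mathcal{M}^H$. Thus triviality of $\di'$ on $A^{coH}$ forces $A^{AcoH}=A^{coH}$.

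There is no genuine obstacle here: the substantive content has already been absorbed into the two preceding lemmas, and the only point of care is matching the hypothesis on triviality of $\di'$ with the factor of Lemma~\ref{lem:2m}(2) whose vanishing is needed, which it does by construction. The hypothesis that $\p$ is an algebra map enters only implicitly, through its role in ensuring that $p^M_\a$ and the action $\di'$ are well-behaved in Lemmas~\ref{lem:2l} and \ref{lem:2m}.
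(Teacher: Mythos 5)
Your argument is correct and follows the paper's own proof essentially verbatim: both establish $M^{coH}\subseteq M^{AcoH}$ by writing a coinvariant as $p^M_\a(m)$ via Lemma \ref{lem:2l} and then killing $a\di p^M_\a(m)$ through Lemma \ref{lem:2m}(2) together with the triviality of $\di'$, with part (2) obtained by specializing to $M=A$.
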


\begin{proof}
For all $a\in A_\a, m\in M_e$, by Lemma \ref{lem:2m} (2),
$$a\di p^M_\a(m)=\p_\a(a_{(1,\a)})\cdot(a_{(0,e)}\di'p^M_\a( m))=0,$$
thus $p^M_\a(m)\in M^{AcoH}$ and $M^{coH}\subseteq M^{AcoH}$, as required.
\end{proof}

We are now in the position to give the second main result: fundamental theorem for Poisson $(A,H)$-Hopf modules. 

\begin{theorem}\label{thm:2o}
 Let $M$ be a Poisson $(A, H)$-Hopf module, and $\phi:H\rightarrow A^A$ a right $H$-colinear algebra map. Suppose $M^{coH}$ and $A^{coH}$ are trivial Lie $A_e$-modules under $\di'$. Then the morphism $\Phi$ given in Lemma \ref{lem:2k} is an isomorphism of Poisson $(A, H)$-Hopf modules.
\end{theorem}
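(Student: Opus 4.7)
The plan is to exploit the triviality hypothesis to reduce the statement to the classical Hopf group-coalgebra setting, and then construct a two-sided set-theoretic inverse of $\Phi$ via the projections $p^M_\a$ and $p^A_\a$ introduced before Lemma \ref{lem:2m}.

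First, since $M^{coH}$ and $A^{coH}$ are trivial Lie $A_e$-modules under $\di'$, Lemma \ref{lem:2n} gives $M^{AcoH}=M^{coH}$ and $B=A^{AcoH}=A^{coH}$. Thus $\Phi$ becomes the familiar comparison map $A\o_{A^{coH}}M^{coH}\ra M$. Lemma \ref{lem:2k} already shows $\Phi$ is a morphism in $_{\mathcal{P}_A}\mathcal{M}^H$, so it suffices to exhibit a bijective inverse; the Poisson $(A,H)$-Hopf module compatibility of the inverse then follows automatically from that of $\Phi$.

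I would define
$$\Psi_\a:M_\a\lr A_\a\o_{B_\a}M^{coH}_\a,\qquad \Psi_\a(m)=\p_\a(m_{(1,\a)})\o_{B_\a}p^M_\a(m_{(0,e)}),$$
with $\r_{e,\a}(m)=m_{(0,e)}\o m_{(1,\a)}$. Lemma \ref{lem:2l} ensures that the family $(p^M_\b(m_{(0,e)}))_{\b\in G}$ lies in $M^{coH}$, so the second tensor factor is legitimately in $M^{coH}_\a$. The identity $\Phi\circ\Psi=\mathrm{id}_M$ is then immediate from Lemma \ref{lem:2m}(4). For the reverse identity $\Psi\circ\Phi=\mathrm{id}$, I would take $a\in A_\a$ and $m=(m_\b)\in M^{coH}$, apply the comodule-algebra compatibility to $a\cdot m_\a$, use coinvariance $\r_{e,\a}(m_\a)=m_e\o 1_\a$ to simplify, invoke Lemma \ref{lem:2m}(1) to split $p^M_\a(a_{(0,e)}\cdot m_e)=p^A_\a(a_{(0,e)})\cdot p^M_\a(m_e)$, observe that $p^M_\a(m_e)=m_\a$ (via $\r_{\a,\a^{-1}}(m_e)=m_\a\o 1_{\a^{-1}}$), and finally pull $p^A_\a(a_{(0,e)})\in A^{coH}_\a=B_\a$ across $\o_{B_\a}$; the residual product $\p_\a(a_{(1,\a)})p^A_\a(a_{(0,e)})$ collapses to $a$ by Lemma \ref{lem:2m}(4) applied to the Poisson Hopf module $A$.

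The main obstacle is keeping the $G$-grading straight in this last calculation and recognizing that the two halves of the triviality hypothesis play distinct roles: triviality on $M^{coH}$ is what makes $M^{coH}$ the correct target of the inverse, while triviality on $A^{coH}$ is precisely what legitimizes transferring $p^A_\a(a_{(0,e)})$ across the $B_\a$-balanced tensor product (since it forces $B=A^{coH}$, placing $p^A_\a(a_{(0,e)})$ inside $B_\a$). Once these bookkeeping subtleties are handled, the rest is a routine assembly of the auxiliary lemmas.
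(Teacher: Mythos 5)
Your proposal is correct and follows essentially the same route as the paper: the same inverse $\Psi_\a(m)=\p_\a(m_{(1,\a)})\o_{B_\a}p^M_\a(m_{(0,e)})$, with $\Phi\circ\Psi=\mathrm{id}$ coming from Lemma \ref{lem:2m}(4) and $\Psi\circ\Phi=\mathrm{id}$ from coinvariance, Lemma \ref{lem:2m}(1), and pulling $p^A_\a(a_{(0,e)})\in B_\a$ across the balanced tensor product. Your remark on the distinct roles of the two triviality hypotheses is an accurate reading of why the paper's computation goes through.
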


\begin{proof}
We have already known that $\Phi$ is a homomorphism of Poisson $(A, H)$-Hopf modules. By the assumption, the action $\di'$ of $A_e$ on $M^{coH}$ is trivial, then we have $M^{AcoH}=M^{coH}$. Hence we obtain a family of well-defined $k$-linear maps 
$$\Psi_\a:M_\a\rightarrow A_\a\o_{B_\a}(M^{AcoH})_\a,\ m\mapsto \phi_\a(m_{(1,\a)})\o_{B_\a}p^M_\a(m_{(0,e)}).$$
Let $\Psi=\{\Psi_\a\}_{\a\in G}$. For all $a\in A_\a,m=(m_\a)\in M^{AcoH}$,
\begin{align*}
&(\Psi_\a\circ\Phi_\a)(a\o_{B_\a}m_\a)=\Psi_\a(a\cdot m_\a)\\
&=\phi_\a((a\cdot m_\a)_{(1,\a)})\o_{B_\a}p^M_\a((a\cdot m_\a)_{(0,e)})\\
&=\phi_\a(a_{(1,\a)} m_{_\a(1,\a)})\o_{B_\a}p^M_\a(a_{(0,e)}\cdot m_{_\a(0,e)})\\
&=\phi_\a(a_{(1,\a)} 1_{\a})\o_{B_\a}p^A_\a(a_{(0,e)})\cdot p^M_\a(m_{e})\\
&=\phi_\a(a_{(1,\a)})p^A_\a(a_{(0,e)})\o_{B_\a}p^M_\a(m_e)\\
&=a\o_{B_\a}m_\a.
\end{align*}
Thus $\Psi\circ\Phi=id_{A\o_{B}M^{AcoH}}$. It is a routine exercise to verify that $\Phi\circ\Psi=id_M$. The proof is completed.
\end{proof}

\begin{definition}
An  $H$-comodule $N=\{N_\a\}_{\a\in G}$ is called trivial if $N=N^{coH}$, namely for all $\a,\b\in G,n_{\a\b}\in N_{\a\b}$, there exists an unique element $n_\a\in N_\a$ such that 
$$\r_{\a,\b}(n_{\a\b})=n_\a\o 1_{H_\b}.$$
\end{definition}

\begin{lemma}\label{lem:2p}
Let $N$ be a left $B$-module and a trivial right $H$-comodule. Then $A\o_BN=\{A_\a\o_{B_\a}N_\a\}_{\a\in G}$ is a Poisson $(A,H)$-Hopf module with the following structure
\begin{align*}
&a'\cdot (a\o_{B_\a} n'_\a)=a'a\o_{B_\a}n'_\a ,\\
&a'\di(a\o_{B_\a} n'_\a)=\{a',a\}\o_{B_\a} n'_\a,\\
&(b\o_{B_\a} n_{\a\b})_{(0,\a)}\o (b\o_{B_\a} n_{\a\b})_{(1,\b)}=b_{(0,\a)}\o_Bn_\a\o b_{(1,\b)},
\end{align*}
for all $a,a'\in A_{\a},b\in  A_{\a\b},n'_\a\in N_\a,n_{\a\b}\in N_{\a\b}$.
\end{lemma}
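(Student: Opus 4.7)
The plan is to verify the axioms of a Poisson $(A,H)$-Hopf module structure on $A\o_BN$, following the template of Lemma~\ref{lem:2j}. There are three pieces of structure to install: the associative $A$-action, the Lie $A$-action, and the $H$-coaction. For each one must first check well-definedness on the balanced tensor product $\o_{B_\a}$, and then verify the defining axioms together with the compatibilities (\ref{2a}), (\ref{2b}), the Hopf module axiom, and the Poisson-Hopf identity (\ref{2d}).

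Well-definedness of the multiplicative $A$-action across $\o_{B_\a}$ is immediate. For the Lie action $a'\di(a\o_{B_\a}n)=\{a',a\}\o_{B_\a}n$, the key point is the inclusion $B=A^{AcoH}\subseteq A^A$: for any $b_0\in B_\a$ we have $\{a',b_0\}=0$, so by the Leibniz rule $\{a',ab_0\}=\{a',a\}b_0$, balancing across the tensor. The Jacobi identity and the Poisson identities (\ref{2a})--(\ref{2b}) then transport directly from $A$ onto the first tensor factor, exactly as in Lemma~\ref{lem:2j}.

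For the coaction, well-definedness is the technical core. Given $\tilde b\in B_{\a\b}$, the inclusion $B\subseteq A^{coH}$ yields $\r_{\a,\b}(\tilde b)=\tilde b_\a\o 1_{H_\b}$ for a unique $\tilde b_\a\in B_\a$, while triviality of $N$ gives $\r_{\a,\b}(n_{\a\b})=n_\a\o 1_{H_\b}$ and $\r_{\a,\b}(\tilde b\cdot n_{\a\b})=(\tilde b\cdot n_{\a\b})_\a\o 1_{H_\b}$. Applying the comodule-algebra axiom to $b\tilde b$ and chasing through the proposed coaction formula gives
\[
(b\tilde b)_{(0,\a)}\o_{B_\a}n_\a\o(b\tilde b)_{(1,\b)}=b_{(0,\a)}\tilde b_\a\o_{B_\a}n_\a\o b_{(1,\b)}=b_{(0,\a)}\o_{B_\a}\tilde b_\a n_\a\o b_{(1,\b)},
\]
which must agree with $b_{(0,\a)}\o_{B_\a}(\tilde b\cdot n_{\a\b})_\a\o b_{(1,\b)}$. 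The residual identity $(\tilde b\cdot n_{\a\b})_\a=\tilde b_\a\cdot n_\a$ is forced by uniqueness of the coinvariant tuple together with the multiplicativity of $\r$ on coinvariants. Coassociativity and counitality of the new coaction then collapse to those of $\r$ on $A$, combined with triviality on the $N$-factor.

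The remaining Hopf module compatibility and the Poisson-Hopf identity (\ref{2d}) are mechanical repetitions of the calculations in Lemma~\ref{lem:2j}: at each step the relevant bracket or product lives entirely in the $A$-factor, and one invokes only the comodule-algebra identity (\ref{2c}) together with the triviality of the coaction on $N$. The main obstacle is thus the well-definedness of the coaction over $\o_B$, which couples the coinvariance of $B$ inside $A$, the triviality of $N$, and the compatibility between the $B$-module structure on $N$ and the triviality data; once that is secured, the remainder is bookkeeping.
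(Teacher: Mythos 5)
Your proposal takes essentially the same route as the paper, whose entire proof is the single line ``the proof is similar to Lemma \ref{lem:2j}'': you correctly isolate the only new content, namely well-definedness over $\o_{B_\a}$, handling the Lie action via $B\subseteq A^{A}$ (so $\{a',b_0\}=0$) and the coaction via $B\subseteq A^{coH}$ plus triviality of $N$, with the remaining axioms transported verbatim from Lemma \ref{lem:2j}. The one caveat is that the identity $(\tilde b\cdot n_{\a\b})_\a=\tilde b_\a\cdot n_\a$ is not literally ``forced'' by the bare hypotheses (a left $B$-module with an unrelated trivial coaction need not satisfy it); it is the implicit compatibility built into the objects of $_B\mathcal{M}^{H_0}$, a point the paper glosses over as well.
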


\begin{proof}
The proof is similar to Lemma \ref{lem:2j}.
\end{proof}

For Poisson $(A,H)$-Hopf modules $M,N$ and $f:M\rightarrow N$ in $_{\mathcal{P}A}\mathcal{M}^H$, since for $m=(m_\a)\in M^{AcoH},a\in A_\a$, 
\begin{align*}
f_{\a\b}(m_{\a\b})_{(0,\a)}\o f_{\a\b}(m_{\a\b})_{(1,\b)}&=f_{\a}(m_{\a\b(0,\a)})\o m_{\a\b(1,\b)}\\
&=f_{\a}(m_{\a})\o 1_{H_\b},
\end{align*}
and 
$$a\di f_\a(m_\a)=f(a\di m_\a)=0,$$
we have $f(m)=(f_\a(m_\a))\in N^{AcoH}$. This gives a functor
$$F_1=(-)^{AcoH}:\!_{\mathcal{P}A}\mathcal{M}^H\rightarrow\!_B\mathcal{M},\ M\mapsto M^{AcoH}.$$
In what follows, we will denote the subcategory of $_B\mathcal{M}$ whose objects has a  trivial $H$-comodule structure by $_B\mathcal{M}^{H_0}$. By Lemma \ref{lem:2p}, we also have a functor
$$F_2:\!_B\mathcal{M}^{H_0}\rightarrow\!_{\mathcal{P}A}\mathcal{M}^H, \ N\mapsto A\o_B N.$$

We denote the set of all morphisms of both $B$-module and trivial $H$-comodule by $Hom^{H_0}_B(N,M^{AcoH})$.
\begin{proposition}\label{pro:2q}
Let $M\in\!_{\mathcal{P}A}\mathcal{M}^H$ and $N\in\!_B\mathcal{M}^{H_0}$. There is a functorial isomorphism 
$$
\psi:\!_{\mathcal{P}A}Hom^H(A\o_BN,M)\rightarrow Hom^{H_0}_B(N,M^{AcoH}),
$$
where for any $f\in\!_{\mathcal{P}A}Hom^H(A\o_BN,M),n\in N_\a$,  
$\psi(f)_\a(n)=f_\a(1_{A_\a}\o n).$
\end{proposition}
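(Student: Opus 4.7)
The plan is to construct an explicit two-sided inverse for $\psi$. Given $g\in\mathrm{Hom}^{H_0}_B(N,M^{AcoH})$, I would define
$$\psi^{-1}(g)_\a(a\o_{B_\a}n)=a\cdot g_\a(n),$$
for $a\in A_\a$ and $n\in N_\a$; this is well defined over $B$ by the $B$-linearity of $g$, and $A$-linearity is immediate from the definition of the $A$-action on $A\o_B N$ in Lemma \ref{lem:2p}.

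First I verify that $\psi(f)$ lands in $\mathrm{Hom}^{H_0}_B(N,M^{AcoH})$. Using that $N$ is a trivial $H$-comodule and that $1_A$ is $H$-coinvariant, the coaction formula of Lemma \ref{lem:2p} gives $\r_{\a,\b}(1_{A_{\a\b}}\o n)=1_{A_\a}\o n'\o 1_{H_\b}$ for the appropriate partner $n'\in N_\a$; applying $H$-colinearity of $f$ then shows that $\psi(f)(n)\in M^{coH}$. For the Poisson-centre condition, for $a\in A_\a$ the Lie action on $A\o_B N$ yields $a\di(1_{A_\a}\o n)=\{a,1_{A_\a}\}\o n=0$, so by Lie $A$-linearity of $f$,
$$a\di\psi(f)_\a(n)=f_\a(a\di(1_{A_\a}\o n))=0,$$
and hence $\psi(f)_\a(n)\in M^{AcoH}_\a$. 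The $B$-linearity of $\psi(f)$ and its respect for the trivial $H$-coaction follow routinely from the corresponding properties of $f$. For $\psi^{-1}(g)$, the Lie $A$-linearity is the key check: axiom (\ref{2a}) gives
$$a'\di(a\cdot g_\a(n))=\{a',a\}\cdot g_\a(n)+a\cdot(a'\di g_\a(n))=\{a',a\}\cdot g_\a(n),$$
since $g_\a(n)\in M^{A_\a}_\a$ annihilates the second summand, and the result matches $\psi^{-1}(g)_\a(a'\di(a\o n))$.

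The two compositions are inverse: $\psi\circ\psi^{-1}(g)_\a(n)=1_{A_\a}\cdot g_\a(n)=g_\a(n)$, and $\psi^{-1}\circ\psi(f)_\a(a\o n)=a\cdot f_\a(1\o n)=f_\a(a\o n)$ by $A$-linearity of $f$, while naturality in $M$ and $N$ is formal. The main obstacle I expect is the $H$-colinearity of $\psi^{-1}(g)$: unfolding the $(A,H)$-Hopf module compatibility on $a\cdot g_\a(n)$ yields $a_{(0,\mu)}\cdot g_\a(n)_{(0,\mu)}\o a_{(1,\nu)}g_\a(n)_{(1,\nu)}$, and one must reconcile this with the tensor-product coaction in Lemma \ref{lem:2p}; this is where both the coinvariance of $g_\a(n)\in M^{AcoH}$ and the fact that $g$ intertwines the trivial $H$-coactions on $N$ and $M^{AcoH}$ are needed to collapse $g_\a(n)_{(1,\nu)}$ to $1_{H_\nu}$ and to identify $g_\a(n)_{(0,\mu)}$ with $g_\mu$ applied to the $\mu$-partner of $n$.
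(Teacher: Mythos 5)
Your proposal is correct and follows essentially the same route as the paper: the same inverse $\psi'(g)_\a(a\o_{B_\a}n)=a\cdot g_\a(n)$, the same use of $g_\a(n)\in M^{A_\a}_\a$ to kill the extra term in the Lie-linearity check via axiom (\ref{2a}), and the same collapse of $g_\a(n)_{(1,\nu)}$ to $1_{H_\nu}$ (via $H$-colinearity of $g$ and triviality of the coactions) for the $H$-colinearity of the inverse. You even supply the small verification that $\psi(f)_\a(n)\in M^{A_\a}_\a$ (from $\{a,1_{A_\a}\}=0$) which the paper dismisses as "easy to see."
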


\begin{proof}
Since for all $n_{\a\b}\in N_{\a\b}$, 
\begin{align*}
&(\psi(f)_{\a\b}(n_{\a\b}))_{(0,\a)}\o(\psi(f)_{\a\b}(n_{\a\b}))_{(1,\b)}\\
&=f_{\a\b}(1\o n_{\a\b})_{(0,\a)}\o f_{\a\b}(1\o n_{\a\b})_{(1,\b)}\\
&=f_{\a}((1\o n_{\a\b})_{(0,\a)})\o (1\o n_{\a\b})_{(1,\b)}\\
&=f_{\a}(1_{A_\a}\o n_{\a})\o 1_{H_\b}\\
&=\psi(f)_\a(n_\a)\o 1_{H_\b},
\end{align*}
we have Im$(\psi(f))\subseteq M^{AcoH}$, and that $\psi(f)$ is $H$-colinear. Easy to see that $\psi(f)_\a(n)\in M^{A_\a}_\a$ for all $n\in N_\a$ and $\psi(f)$ is an $B$-module map, thus 
the map $\psi$ is well-defined, where $\r_{\a,\b}(n_{\a\b})=n_\a\o 1_{H_\b}.$

Define $\psi':Hom^{H_0}_B(N,M^{AcoH})\rightarrow\!_{\mathcal{P}A}Hom^H(A\o_BN,M) $ by
$$\psi'(g)_\a(a\o_{B_\a}n)=a\cdot g_\a(n).$$
For all $a,a'\in A_\a,n\in N_\a$,
\begin{align*}
&\psi'(g)_\a(a'\di(a\o_{B_\a}n))=\psi'(g)_\a(\{a',a\}\o_{B_\a}n)\\
&=\{a',a\}\cdot g_\a(n)=a'\di(a\cdot m)-a\cdot(a'\di n)\\
&=a'\di(a\cdot m)=a'\di \psi'(g)_\a(a\o_{B_\a}n),
\end{align*}
i.e., $\psi'(g)$ is Lie $A$-linear. For $b\in A_{\a\b},n_{\a\b}\in N_{\a\b}$,
\begin{align*}
&\psi'(g)_{\a\b}(b\o_{B_{\a\b}}n_{\a\b})_{(0,\a)}\o \psi'(g)_{\a\b}(b\o_{B_\a}n_{\a\b})_{(1,\b)}\\
&=(b\cdot g_{\a\b} (n_{\a\b}))_{(0,\a)}\o (b\cdot g_{\a\b} (n_{\a\b}))_{(1,\b)}\\
&=b_{(0,\a)}\cdot g_{\a\b} (n_{\a\b})_{(0,\a)}\o b_{(1,\b)} g_{\a\b} (n_{\a\b})_{(1,\b)}\\
&=b_{(0,\a)}\cdot g_{\a\b} (n_{\a\b(0,\a)})\o b_{(1,\b)} n_{\a\b(1,\b)}\\
&=b_{(0,\a)}\cdot g_{\a} (n_{\a})\o b_{(1,\b)} \\
&=\psi'(g)_\a(b_{(0,\a)}\o_{B_\a}n_{\a})\o b_{(1,\b)}\\
&=\psi'(g)_\a((b\o_{B_{\a\b}}n_{\a\b})_{(0,\a)})\o (b\o_{B_{\a\b}}n_{\a\b})_{(1,\b)}.
\end{align*}
Therefore $\psi'$ is well-defined. It is a routine exercise to check that $\psi$ and $\psi'$ are mutual inverses. The proof is completed.
\end{proof}

\begin{corollary}\label{cor:2r}
The functors $F_1$ and $F_2$ are adjoint pair with unit and counit
$$\e_N:N\rightarrow (A\o_BN)^{AcoH},\ \e_{N_\a}(n)=1_{A_\a}\o_{B_\a}n,$$
and 
$$\d_M:A\o_BM^{AcoH}\rightarrow M,\ \d_{M_\a}(a\o_{B_{\a}}m)=a\cdot m.$$
\end{corollary}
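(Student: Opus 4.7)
The plan is to read off the unit and counit directly from the Hom-set bijection $\psi$ of Proposition \ref{pro:2q}, invoking the standard categorical principle that any natural isomorphism
$$\psi: {}_{\mathcal{P}A}\mathrm{Hom}^H(F_2 N, M) \;\cong\; \mathrm{Hom}^{H_0}_B(N, F_1 M)$$
automatically exhibits $F_2 \dashv F_1$ as an adjoint pair whose unit at $N$ is $\psi(\mathrm{id}_{F_2 N})$ and whose counit at $M$ is $\psi^{-1}(\mathrm{id}_{F_1 M})$. Thus the work reduces to a short unpacking of the formulas for $\psi$ and its inverse $\psi'$ already written down in the proof of Proposition \ref{pro:2q}.

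First I would compute the unit by evaluating $\psi$ at the identity of $F_2 N = A \o_B N$. The defining formula $\psi(f)_\a(n) = f_\a(1_{A_\a} \o n)$ yields at once $\e_{N_\a}(n) = 1_{A_\a} \o_{B_\a} n$; Proposition \ref{pro:2q} further guarantees that the result factors through $(A \o_B N)^{AcoH}$ and is $B$-linear and trivially $H$-colinear, so $\e_N$ really is a morphism in $_B\mathcal{M}^{H_0}$. Dually, applying $\psi'(g)_\a(a \o_{B_\a} n) = a \cdot g_\a(n)$ to the identity of $F_1 M = M^{AcoH}$ gives $\d_{M_\a}(a \o_{B_\a} m) = a \cdot m$, which coincides with the morphism $\Phi$ of Lemma \ref{lem:2k} and is therefore a morphism in ${}_{\mathcal{P}_A}\mathcal{M}^H$.

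It remains to verify the two triangle identities. For the counit triangle, I would compute, for $n \in N_\a$ and $a \in A_\a$,
$$(\d_{F_2 N} \circ F_2(\e_N))_\a(a \o_{B_\a} n) = \d_{(A\o_B N)_\a}\bigl(a \o_{B_\a} (1_{A_\a} \o_{B_\a} n)\bigr) = a \cdot (1_{A_\a} \o_{B_\a} n) = a \o_{B_\a} n.$$
For the unit triangle, $(F_1(\d_M) \circ \e_{F_1 M})_\a(m) = \d_{M_\a}(1_{A_\a} \o_{B_\a} m) = 1_{A_\a} \cdot m = m$ for every $m \in M^{AcoH}_\a$. Naturality of $\e$ in $N$ and of $\d$ in $M$ is inherited from the functoriality of $\psi$ built into Proposition \ref{pro:2q}.

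The only real obstacle is a bookkeeping one: checking that $\e_N$ takes values in the coinvariants and is trivially $H$-colinear, and that $\d_M$ is Lie $A$-linear and $H$-colinear. Both facts are already subsumed by the well-definedness statements for $\psi$ and $\psi'$ proved inside Proposition \ref{pro:2q}, so no substantive new computation is required and the corollary follows formally.
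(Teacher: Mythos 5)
Your argument is correct and is exactly the standard verification the paper has in mind: the paper itself omits the proof entirely (``easy to verify and omitted''), and you fill it in by reading the unit and counit off the natural isomorphism $\psi$ of Proposition \ref{pro:2q} as $\psi(\mathrm{id}_{F_2N})$ and $\psi'(\mathrm{id}_{F_1M})$ and then checking the two triangle identities, both of which reduce to $a\cdot(1_{A_\a}\o_{B_\a}n)=a\o_{B_\a}n$ and $1_{A_\a}\cdot m=m$ under the module structures of Lemmas \ref{lem:2j} and \ref{lem:2p}. Nothing is missing.
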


\begin{proof}
The proof is easy to verify and omitted.
\end{proof}

\begin{corollary}
 Let $M$ be a Poisson $(A, H)$-Hopf module, and $\phi:H\rightarrow A^A$ a right $H$-colinear algebra map. Suppose $M^{coH}$ and $A^{coH}$ are trivial Lie $A_e$-modules under $\di'$. Then the functor $F_1=(-)^{AcoH}:\!_{\mathcal{P}A}\mathcal{M}^H\rightarrow\!_B\mathcal{M}^{H_0}$ is dual Maschke, that is, every object of $_{\mathcal{P}A}\mathcal{M}^H$ is $F_1$-relative projective.
\end{corollary}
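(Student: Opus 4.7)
The plan is to reduce this corollary to a direct consequence of Theorem~\ref{thm:2o}. First I would unravel the notions involved: by definition, the functor $F_1$ is dual Maschke precisely when every $M\in\!_{\mathcal{P}A}\mathcal{M}^H$ is $F_1$-relative projective, which via the adjunction established in Corollary~\ref{cor:2r} amounts to asking that the counit $\d_M:A\o_B M^{AcoH}\to M$ admit a section in $_{\mathcal{P}A}\mathcal{M}^H$, equivalently that $M$ be a direct summand (in the Poisson $(A,H)$-Hopf module category) of $F_2 F_1 M=A\o_B M^{AcoH}$.

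The key observation I would exploit is that the counit $\d_M$ described in Corollary~\ref{cor:2r} is literally the map $\Phi$ constructed in Lemma~\ref{lem:2k} and analyzed in Theorem~\ref{thm:2o}: both are given by the formula $a\o_{B_\a} m_\a\mapsto a\cdot m_\a$. Since the hypotheses of the present corollary coincide with those of Theorem~\ref{thm:2o}, that theorem guarantees $\Phi$ is an isomorphism in $_{\mathcal{P}A}\mathcal{M}^H$. An isomorphism automatically splits, with section given by its inverse, so $\d_M$ splits; in fact $M\cong F_2 F_1 M$, which trivially displays $M$ as a direct summand of $F_2 F_1 M$.

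Putting these pieces together, every $M\in\!_{\mathcal{P}A}\mathcal{M}^H$ is $F_1$-relative projective, and therefore $F_1$ is dual Maschke. There is no genuine obstacle here: the whole analytic content was already packaged into Theorem~\ref{thm:2o}, and the remaining work is purely categorical bookkeeping — identifying $\d_M$ with $\Phi_M$ and translating ``isomorphism'' into ``split epimorphism with a section''. If anything, the only point worth double-checking is that the inverse $\Psi$ of $\Phi$ constructed in the proof of Theorem~\ref{thm:2o} is a morphism in $_{\mathcal{P}A}\mathcal{M}^H$, but this is already part of that theorem's statement that $\Phi$ is an isomorphism in the category.
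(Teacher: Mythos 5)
Your proposal is correct and matches the paper's own argument, which likewise deduces the corollary by identifying the counit $\d_M$ of the adjunction in Corollary \ref{cor:2r} with the map $\Phi$ of Lemma \ref{lem:2k} and invoking Theorem \ref{thm:2o} to see that it is an isomorphism, hence split. The paper merely cites Theorem \ref{thm:2o}, Proposition \ref{pro:2q}, Corollary \ref{cor:2r} and the general Maschke-functor criterion of Caenepeel--Militaru, so your write-up is essentially an expanded version of the same reasoning.
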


\begin{proof}
The conclusion follows from Theorem \ref{thm:2o}, Proposition \ref{pro:2q},  Corollary \ref{cor:2r} and \cite[Theorem 3.4]{cae}.
\end{proof}

\section*{Acknowledgement}

This work was supported by the NSF of China (No. 12271292, 11901240).

\end{document}